\title{Generators of maximal orders}
\author[R.V.~Kravchenko]{Rostyslav V.~Kravchenko}
\author{Marcin Mazur}
\author[B.V.~Petrenko]{Bogdan V.~Petrenko}
\address{University of Chicago} \email{rkchenko@gmail.com}
\address{
Department of Mathematics \\
Binghamton University \\
P.O. Box 6000 \\
Binghamton, NY 13902-6000, USA } \email{
mazur@math.binghamton.edu}
\address{
Department of Mathematics and Computer Science\\ Eastern Illinois University \\
600 Lincoln Avenue
\\ Charleston, IL 61920-3099
 }
\email{bvpetrenko@eiu.edu }
\newtheorem*{theorem*}{Theorem}
\newtheorem{theorem}{Theorem}[section]
\newtheorem{lemma}[theorem]{Lemma}
\newtheorem{proposition}[theorem]{Proposition}
\newtheorem{corollary}[theorem]{Corollary}
\newtheorem{remark}[theorem]{Remark}
\newtheorem{conjecture}[theorem]{Conjecture}
\newtheorem{example}[theorem]{Example}
\newtheorem{question}[theorem]{Question}
\def\GL{\text{\rm GL}}
\def\PGL{\text{\rm PGL}}
\def\M{\text{\rm M}}
\def\den{\text{\rm den}}
\def\g{\text{\rm g}}
\def\N{\text{\rm N}}
\newcommand{\Fg}{\mathbb{F}}
\newcommand{\mt}[1]{\M_{#1}(\Fg_q)}
\newcommand{\mw}[2]{\M_{#1}(\Fg_{#2})}
\newcommand{\F}[0]{\mathbb F}
\newcommand{\f}[1]{\frak{#1}}
\newtheorem*{te*}{Theorem}
\newcommand{\mspec}[0]{\text{\rm ~m-Spec~}}
\def\dsp{\def\baselinestretch{1.37}\large}
\begin{document}
\maketitle \dsp

\begin{abstract}
Let $R$ be the ring of algebraic integers in a number field $K$ and let $\Lambda$
be a maximal order in a semisimple $K$-algebra $B$.
Building on our previous work, we compute the smallest number of algebra generators
of $\Lambda$ considered as an $R$-algebra. This reproves and vastly extends the results of P.A.B.~Pleasants,
who considered the case when $B$ is a number field. In order to achieve our goal, we obtain
several results about counting generators of algebras which have finitely many elements. These
results should be of independent interest.

\vspace{4mm} \noindent {\bf Mathematics Subject Classification
(2010).} Primary 16H10, 16P10, 16S15, 15B33.
Secondary 11R45,  11R52.

\vspace{3mm} \noindent {\bf Keywords:} density, smallest number of
generators, probability of generating.
\end{abstract}

\tableofcontents

\section{Introduction}
The investigation of rings of algebraic integers that
can be generated by one element as a ring is an old topic in algebraic number theory. 
Such rings and their field of fractions
are often called {\bf monogenic}. In the early 1970s J. Browkin raised a more general problem of finding the smallest number
of ring generators for the ring of integers of a given number field. 
This question was answered by P. A. B. Pleasants \cite{pleasant}. The present work can be considered
as a vast generalization of the results of \cite{pleasant}, because we address the same question for maximal orders in finite dimensional semisimple $\mathbb Q$-algebras. 

In group theory, the smallest number of generators of a finite group and the structure of
generating sets have been studied for a long time, starting with the
seminal works by Hall \cite{hall}, Neumann \& Neumann \cite{nn} and Gash\"utz \cite{gas}.
The present paper is a continuation of our investigation in \cite{kmp}, where
we have developed methods to compute the smallest number of generators of algebras of finite type
over certain commutative rings. In particular, if $R$ is the ring of integers in a number field and 
$A$ is an $R$-algebra finitely generated as an $R$-module, then the computation of the smallest number of algebra generators  of $A$ often reduces to a computation, for each maximal ideal $\f m$ of $R$, of the smallest number
of generators of $A/\f m A$ as an $R/\f m$-algebra (see the beginning of Section 5 for exact statements).  With this in mind,
in Section 2 we develop techniques to count the number of generating tuples of $R$-algebras which have finitely many elements. 
This often reduces to the problem of counting generating tuples of matrix algebras over finite fields. This problem
is handled in Section 3. In Section 4, we review some background material about maximal orders in simple algebras.
Section 5 contains our main results, Theorem~\ref{smallest} and Theorem~\ref{smallest1}. These theorems
provide exact formulas for the smallest number of generators of a maximal $R$-order in a semisimple algebra over
a number field with the ring of integers $R$.

\paragraph*{\bf{Acknowledgments}}
This work was partially supported by a grant from the
Simons Foundation (\#245855 to Marcin Mazur).


\section{Generators of some finite dimensional algebras over fields}
The Jacobson radical of a ring $S$ is denoted by $J(S)$. 

\begin{lemma}\label{subalg}
Let $A=\prod_{i=1}^m A_i$ be a product of (left or right) Artinian algebras and let $B$ be a subalgebra of $A$ such that
\begin{enumerate}[\rm (i)]
\item for each $i$, the projections of $A$ onto $A_i$ maps $B$ onto $A_i$. 

\item $A=B+J(A)$. 
\end{enumerate}
Then $B=A$.
\end{lemma}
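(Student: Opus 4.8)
The plan is to prove the lemma in two stages: first show that each of the central idempotents of the product decomposition of $A$ already lies in $B$, and then use hypothesis (i) to recover every factor $A_i$. Write $J=J(A)$, let $\pi_i\colon A\to A_i$ denote the projection, and let $f_i=(0,\dots,0,1_{A_i},0,\dots,0)$ be the central idempotent of $A$ supporting the $i$-th factor. I will use three standard facts about (left or right) Artinian rings: $J=\prod_i J(A_i)$ is nilpotent; a surjection of rings carries radical into radical, so $\pi_j(J)\subseteq J(A_j)$; and an idempotent contained in a nil ideal is $0$.

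The heart of the argument, and the step I expect to be the main obstacle, is showing that $f_i\in B$ for every $i$. By hypothesis (ii) the composite $B\hookrightarrow A\twoheadrightarrow A/J$ is surjective, with kernel $B\cap J$; since $J$ is nilpotent, $B\cap J$ is a nilpotent ideal of $B$, and hence idempotents lift modulo $B\cap J$. The image $\bar f_i$ of $f_i$ in $A/J\cong B/(B\cap J)$ is an idempotent of $B/(B\cap J)$, so I may lift it to an idempotent $g_i\in B$ with $g_i\equiv f_i\pmod{J}$. Now I apply the projections. For $j\neq i$ one has $\pi_j(g_i)\equiv\pi_j(f_i)=0\pmod{J(A_j)}$, so $\pi_j(g_i)$ is an idempotent of $A_j$ lying in $J(A_j)$ and is therefore $0$. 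For $j=i$, $\pi_i(g_i)$ is an idempotent of $A_i$ congruent to $1_{A_i}$ modulo $J(A_i)$, and its complement $1_{A_i}-\pi_i(g_i)$ is then an idempotent in the radical, hence $0$, forcing $\pi_i(g_i)=1_{A_i}$. Thus $g_i$ is supported only in the $i$-th coordinate, with entry $1_{A_i}$, i.e. $g_i=f_i$, and so $f_i\in B$.

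Once the idempotents are in $B$, the conclusion follows quickly using (i). Since $f_i\in B$, for every $b\in B$ we have $f_i b f_i\in B$, and $f_i b f_i$ is supported in the $i$-th coordinate with entry $\pi_i(b)$. As $\pi_i(B)=A_i$ by (i), the set $f_i B f_i$ is exactly the $i$-th factor $\{(0,\dots,a,\dots,0):a\in A_i\}$ of $A$, and it is contained in $B$. Summing over $i$ gives $A=\bigoplus_i f_i B f_i\subseteq B$, whence $B=A$. I note that both hypotheses are used in an essential and complementary way: (ii) produces the idempotents $f_i$ in $B$, while (i) is what promotes each idempotent to the entire factor $A_i$.
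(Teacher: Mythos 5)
Your proof is correct, and its overall skeleton matches the paper's — get the central idempotents into $B$ first, then use hypothesis (i) to recover each factor $A_i$ — but the key step is executed by a genuinely different mechanism. The paper constructs $e_i=f_i$ by hand: by (ii) it picks $a\in B$ with $a\equiv e_i\pmod{J(A)}$, observes that the $i$-th component $a_i$ is then invertible in $A_i$, uses (i) to find $b\in B$ whose $i$-th component is $a_i^{-1}$, and notes that $(ab)^N=e_i$ for $N$ large, since the off-$i$ components of $ab$ lie in the nilpotent radicals $J(A_j)$ while the $i$-th component is exactly $1$. You instead invoke the standard theorem that idempotents lift modulo the nil ideal $B\cap J(A)$, via the isomorphism $B/(B\cap J(A))\cong A/J(A)$ supplied by (ii), and then pin the lift down coordinatewise using only that an idempotent in a nilpotent radical is $0$; all these steps check out (including the complement trick $\bigl(1_{A_i}-\pi_i(g_i)\bigr)^2=1_{A_i}-\pi_i(g_i)$, and the fact that lifting modulo a nil ideal needs no identity element in $B$). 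Your route has two small dividends: it shows that (i) is needed only in the final step, whereas the paper also uses (i) to manufacture the inverse $b$, and it avoids the invertibility observation altogether. The cost is importing the lifting theorem as a black box — its standard proof is itself essentially the powering trick the paper performs explicitly — so the paper's argument can be read as a self-contained, hands-on instance of yours.
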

\begin{proof} 
Fix $i\in\{1,2,\ldots,m\}$. Let $e_i=(u_1,\ldots,u_m)$ be the element of $A$ such that
$u_i=1$ and $u_j=0$ for all $j\neq i$. Condition (ii) ensures that
$B$ contains an element $a=(a_1,\ldots,a_m)$ such that $a-e_i\in J(A)$. 
Thus $a_i$ is invertible in $A_i$ and $a_j\in J(A_j)$ for all $j\neq i$. 
By (i), $B$ contains an element $b=(b_1,\ldots,b_m)$ such that $b_i=a_i^{-1}$. Thus $ab=(c_1,\ldots,c_m)\in B$, where $c_i=1$ and $c_j\in J(A_j)$ for all $j\neq i$. Since $J(A_j)$ are nilpotent, $(ab)^N=e_i$ for $N$ sufficiently large.
Thus $B$ contains the idempotents $e_1,\ldots, e_m$. Condition (i) implies now that $B=A$.
\end{proof}

Recall that an Artinian algebra $A$ is called {\em primary} if $A/J(A)$ is
simple.

\begin{theorem}\label{primary}
Let $R$ be a commutative ring. 
Let $A=\prod_{i=1}^m A_i$ be a product of Artinian primary $R$-algebras, such that each $A_i$
is finitely generated as an $R$-module and such that the simple quotients of each $A_i$ are isomorphic
as $R$-algebras. Let $F=R/I$, where $I$ is the annihilator of the simple quotient  of each $A_i$. Let 
$k$ and $m$ be positive integers. Then 
elements 
\[a_1=(a_{11}, \ldots,  a_{1m}), \ldots  ,a_k=(a_{k1}, \ldots,  a_{km})\]
of $A$
generate $A$ as an $R$-algebra if and only if the following two
conditions are satisfied:
\begin{enumerate}[\rm (1)]
\item\label{it3} For any $i = 1, \ldots, m$, the elements $a_{1i},
\ldots, a_{ki}$ generate $A_i$ as an $R$-algebra. 

\item\label{it4}
For any $i \neq j$ there is no isomorphism $\Psi: A_j/J(A_j)\longrightarrow A_i/J(A_i)$
of $F$-algebras such that
\[a_{1i}+J(A_i) = \Psi( a_{1j}+J(A_j)), \ldots,  a_{ki}+J(A_i) = \Psi(a_{kj}+J(A_j)).\]
\end{enumerate}
\end{theorem}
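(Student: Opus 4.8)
The plan is to prove both implications, using Lemma~\ref{subalg} for the harder ``if'' direction. Throughout, write $S_i=A_i/J(A_i)$ for the unique simple quotient, let $B$ denote the $R$-subalgebra of $A$ generated by $a_1,\dots,a_k$, and recall that $J(A)=\prod_{i=1}^m J(A_i)$, so that $A/J(A)\cong\prod_{i=1}^m S_i$ and the image of $a_s$ there is $(\bar a_{s1},\dots,\bar a_{sm})$ with $\bar a_{si}=a_{si}+J(A_i)$. Since each $S_i$ is annihilated by $I$, all the $S_i$ are $F$-algebras, and every $R$-algebra homomorphism between them is automatically an $F$-algebra homomorphism; this will let me match the isomorphisms produced below with the $F$-isomorphisms of condition (2).

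For the ``only if'' direction, assume $B=A$. Projecting onto $A_i$ shows that $a_{1i},\dots,a_{ki}$ generate $A_i$, which is condition (1). For condition (2), suppose toward a contradiction that for some $i\neq j$ there is an $F$-isomorphism $\Psi\colon S_j\to S_i$ with $\bar a_{si}=\Psi(\bar a_{sj})$ for all $s$. Consider the map $A\to S_i\times S_j$ obtained from the two projections followed by the quotient maps. The image of each generator $a_s$ is $(\Psi(\bar a_{sj}),\bar a_{sj})$, which lies in the graph $\Gamma=\{(\Psi(x),x):x\in S_j\}$. Because $\Psi$ is an algebra homomorphism, $\Gamma$ is a proper subalgebra of $S_i\times S_j$ (it misses $(1,0)$, and $S_i\neq 0$ since $S_i$ is simple). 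Hence the image of $B$ is contained in $\Gamma$ and is proper, contradicting $B=A$.

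For the ``if'' direction, assume conditions (1) and (2) and apply Lemma~\ref{subalg} to $B\subseteq A$. Hypothesis (i) of that lemma is exactly condition (1). Hypothesis (ii), that $A=B+J(A)$, is equivalent to the assertion that the image $\bar B$ of $B$ in $A/J(A)\cong\prod_{i=1}^m S_i$ is the whole product. This surjectivity is the crux, and I would prove it by induction on $m$ via a Goursat/subdirect-product analysis. The base case $m=1$ is condition (1). For the inductive step, let $p\colon\bar B\to\prod_{i=1}^{m-1}S_i$ and $q\colon\bar B\to S_m$ be the projections; by the induction hypothesis (applied to the sub-tuple, for which conditions (1) and (2) still hold) $p$ is onto, and by condition (1) $q$ is onto. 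The last coordinates of the elements of $\ker p$ form a two-sided ideal of $S_m$, which by simplicity is either all of $S_m$ or $0$. If it is all of $S_m$, then $\ker p$ contains $\{0\}\times\cdots\times\{0\}\times S_m$, and together with surjectivity of $p$ this forces $\bar B=\prod_{i=1}^m S_i$.

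It remains to rule out $\ker p=0$. In that case $p$ is an isomorphism and $q\circ p^{-1}\colon\prod_{i=1}^{m-1}S_i\to S_m$ is a surjection onto a simple algebra, so its kernel is a maximal ideal; since the maximal ideals of a product of simple algebras are the coordinate hyperplanes, the map factors through projection to some $S_j$ followed by an isomorphism $\Psi\colon S_j\to S_m$. Tracing the generators through the resulting identity gives $\bar a_{sm}=\Psi(\bar a_{sj})$ for all $s$, contradicting condition (2) for the pair $(m,j)$. This completes the induction, so $A=B+J(A)$, and Lemma~\ref{subalg} yields $B=A$. The main obstacle is exactly this surjectivity onto $\prod_{i=1}^m S_i$: the simplicity of the $S_i$ is what reduces the possible Goursat gluings to the two extreme cases, and condition (2) is precisely the hypothesis that kills the nontrivial one.
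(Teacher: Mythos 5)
Your proof is correct, but it takes a genuinely different route from the paper's at the decisive step. The paper disposes of the semisimple quotient by citation: after checking that $F$ is a field (so that each $A_i/J(A_i)$ is a finite dimensional simple $F$-algebra), it invokes the already-proved case $J(A)=0$, namely Theorem~6.1 of \cite{kmp}, both for the necessity of condition (2) and for the surjectivity of $B$ onto $A/J(A)$; the final lifting through the radical is then done with Lemma~\ref{subalg}, exactly as in your argument. You instead reprove the semisimple case from scratch: necessity of (2) via the graph-subalgebra observation, and surjectivity of $\bar B$ onto $\prod_i S_i$ by induction on $m$ with a Goursat-type dichotomy, where simplicity of $S_m$ forces the ideal of last coordinates of $\ker p$ to be either all of $S_m$ (which, with surjectivity of $p$, closes the induction) or zero (which, since the maximal two-sided ideals of a finite product of simple unital algebras are the coordinate hyperplanes, manufactures precisely the isomorphism $\Psi\colon S_j\to S_m$ forbidden by condition (2)). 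What each approach buys: yours is self-contained and in fact slightly more general, since it never uses that $F$ is a field, nor finite-dimensionality, nor even the hypothesis that the simple quotients are pairwise isomorphic (for non-isomorphic quotients condition (2) is vacuous and your dichotomy still applies), whereas the paper's is shorter and leans on a result from \cite{kmp} that it needs anyway for the counting statements (Theorem~\ref{formula}). Two small points you should make explicit when writing this up: the set of last coordinates of $\ker p$ is a two-sided ideal of $S_m$ only because $q$ is onto, which is where condition (1) enters that step; and the $\Psi$ produced by your Goursat analysis is a priori an $R$-algebra isomorphism, which is $F$-linear because $I$ annihilates both $S_j$ and $S_m$ --- you note the latter, and both are routine.
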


Note that a special case of this theorem, when $J(A)=0$, is proved in \cite[Theorem~6.1]{kmp}.
Indeed, as we will see in the course of the proof of Theorem~\ref{primary}, $F$ is a field
and each quotient $A_i/J(A_i)$ is a finite dimensional
simple $F$-algebra. If $J(A)=0$, then $J(A_i)=0$ for all $i$ and $A$ can be considered as a product of $m$ copies of a finite dimensional simple $F$-algebra as in the Theorem~6.1 of \cite{kmp}. 

\begin{proof} 
Note that the center $Z_i$ of $A_i/J(A_i)$ is a field and $F$ can be identified with a subring
of $Z_i$ such that $A_i/J(A_i)$ is a finitely generated $F$-module. It follows that every
element of $A_i/J(A_i)$ is integral over $F$. In particular, the field $Z_i$ is integral over $F$
and therefore $F$ is also a field. Thus $A_i/J(A_i)$ is a finite dimensional simple $F$-algebra.

We will use the fact that Theorem~\ref{primary} is true when $J=0$ (see \cite[Theorem~6.1]{kmp}).

The necessity of condition (1) is clear. If $a_1,\ldots, a_k$ generate the $R$-algebra $A$,
then the images of these elements in $A/J(A)$ generate $A/J(A)$ as an $F$-algebra. The necessity
of condition
(2) is now a simple consequence of the fact that Theorem~\ref{primary} is true for the $F$-algebra 
$A/J=\prod_{i=1}^{m}A_i/J(A_i)$.

Conversely, suppose that conditions (1) and (2) are satisfied. Then the images of $a_1,\ldots,a_k$
in $A/J(A)$ generate $A/J(A)$ as an $F$-algebra (we use again the fact that Theorem~\ref{primary} is true for the $F$-algebra $A/J(A)$). Let $B$ be the $R$-subalgebra of $A$ generated by
$a_1,\dots,a_k$. The quotient map $A\longrightarrow A/J(A)$ maps $B$ surjectively onto $A/J(A)$. Thus $B$ satisfies condition (ii) of Lemma~\ref{subalg}. Condition (1)
implies that $B$ also satisfies (i) of Lemma~\ref{subalg}. Thus $B=A$. \end{proof}

The following result is an algebra analog of a well known and very useful result of Gasch\"utz
\cite{gas} for groups. It is clear from the proof that analogs of this result hold for other
finite algebraic structures. 

\begin{theorem}\label{gaschutz}
Let $R$ be a commutative ring, let $A$ be an $R$-algebra of finite cardinality and let $f:A\longrightarrow
B$ be an epimorphism of $R$-algebras. For any natural number $k$ there is
a non-negative integer $g_k(A,B)$ with the following property:
any sequence of $k$ generators of the $R$-algebra $B$ can be lifted to exactly $g_k(A,B)$
sequences of $k$ generators of $A$. 
\end{theorem}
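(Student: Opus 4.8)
The plan is to adapt to the algebra setting the classical M\"obius-inversion proof of Gasch\"utz's lemma for finite groups. Fix any generating $k$-tuple $(b_1,\dots,b_k)$ of $B$ and set $K=\ker f$, a finite two-sided ideal of $A$. What I want to show is that the number of \emph{lifts} $(a_1,\dots,a_k)\in A^k$ (meaning $f(a_i)=b_i$ for all $i$) that generate $A$ as an $R$-algebra is given by an expression depending only on $f$, $A$, $B$, and $k$, and not on the chosen tuple $(b_1,\dots,b_k)$. That number will then serve as $g_k(A,B)$.

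First I would set up the count over the lattice of $R$-subalgebras of $A$, which is finite because $A$ is. For a subalgebra $C\subseteq A$, let $M(C)$ denote the number of tuples $(a_1,\dots,a_k)\in C^k$ with $f(a_i)=b_i$, and let $N(C)$ denote the number of such tuples that generate exactly $C$. Since any tuple with all entries in $C$ generates some subalgebra contained in $C$, and since the condition $f(a_i)=b_i$ forces the generated subalgebra to surject onto $B$, one obtains the relation $M(C)=\sum_{C'\subseteq C}N(C')$, the sum running over $R$-subalgebras $C'$ of $C$. The quantity sought is $N(A)$.

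The crux is to evaluate $M(C)$ explicitly and to see that it is essentially tuple-independent. Because each $a_i$ lies over the fixed $b_i$, if $f(C)\neq B$ then, as the $b_i$ generate $B$ and $f(C)$ is a subalgebra, some $b_i\notin f(C)$ and hence $M(C)=0$. If instead $f(C)=B$, then $f|_C\colon C\to B$ is surjective with kernel $C\cap K$, so each $b_i$ has exactly $|C\cap K|$ preimages in $C$, and the coordinates vary independently; thus $M(C)=|C\cap K|^k$. The essential observation is that both the set $\{C : f(C)=B\}$ and the integers $|C\cap K|$ are intrinsic to $f$, involving nothing about the particular generating tuple.

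Finally I would invert. Applying M\"obius inversion in the finite poset of $R$-subalgebras of $A$ to the relation $M(C)=\sum_{C'\subseteq C}N(C')$ gives
\[
N(A)=\sum_{C\subseteq A}\mu(C,A)\,M(C)=\sum_{\substack{C\subseteq A\\ f(C)=B}}\mu(C,A)\,|C\cap K|^k,
\]
where $\mu$ is the M\"obius function of the subalgebra lattice of $A$. The right-hand side depends only on $A$, $B$, $f$, and $k$, so it is the desired nonnegative integer $g_k(A,B)$, and the proof is complete. The only point requiring real care is the clean evaluation of $M(C)$ together with the verification that it is the surjectivity condition $f(C)=B$ — not some condition varying with the chosen tuple — that decides which subalgebras contribute; once this is isolated, independence of the lift count from the choice of $(b_1,\dots,b_k)$ is automatic.
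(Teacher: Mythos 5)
Your proposal is correct: the fiber count is right (the preimage of each $b_i$ under $f|_C$ is an additive coset of $C\cap K$, nonempty exactly when $f(C)=B$, and since the $b_i$ generate $B$ and $f(C)$ is an $R$-subalgebra, $f(C)\neq B$ forces some $b_i\notin f(C)$, so $M(C)=0$ in that case), the relation $M(C)=\sum_{C'\subseteq C}N(C')$ holds, and M\"obius inversion over the finite subalgebra poset is legitimate. The paper takes the recursive rather than the closed-form route: it fixes $B$ and inducts on $|A|$, noting that the $h^k$ lifts of a generating tuple (where $h=|\ker f|$) each generate a subalgebra in $\Phi=\{C\subseteq A: f(C)=B\}$, so the number of generating lifts is $h^k-\sum_{C\in\Phi,\,C\neq A}g_k(C,B)$, with tuple-independence of each $g_k(C,B)$ supplied by the inductive hypothesis. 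Your argument is precisely the unwound version of this recursion: the paper's subtraction formula is the inverse of your identity $M(C)=\sum_{C'\subseteq C}N(C')$, and iterating it reproduces your expression $g_k(A,B)=\sum_{f(C)=B}\mu(C,A)\,|C\cap K|^k$. What your version buys is an explicit formula and no induction, at the cost of computing $M(C)=|C\cap K|^k$ for \emph{every} subalgebra surjecting onto $B$ (the paper needs only the case $C=A$, where the count is $h^k$); what the paper's version buys is brevity and the automatic nonnegativity bookkeeping of the recursion, though in your proof nonnegativity is equally immediate since $N(A)$ counts the generating lifts of an actual tuple. Both proofs rest on the same two observations, so this is a genuine but mild variation --- the closed-form M\"obius expression is a nice addition the paper does not state.
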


\begin{proof}
We fix $B$ and prove the result by induction on the order of $A$. If $|A|=|B|$ then $f$ is an isomorphism
and the result is clear with $g_k(A,B)=1$. Consider a surjective homomorphism $f:A\longrightarrow
B$ of $R$-algebras and let $h$ be the cardinality of the kernel of $f$. Let $\Phi$ be the family of all $R$-subalgebras of $A$ which are mapped onto $B$ by $f$.
Consider any $k$ elements $b_1,\ldots,b_k$ of $B$ which generate $B$ as an $R$-algebra. The number of $k$-tuples 
$a_1,\ldots,a_k$ of elements of $A$ such that $f(a_i)=b_i$ for $i=1,\ldots,k$, is equal to $h^k$. Any such
$k$-tuple generates an $R$-algebra in $\Phi$. By the inductive assumption, the number of such $k$-tuples
which generate $A$ is $h^k-\sum_{C\in \Phi, C\neq A}g_k(C,B)$.
\end{proof}

\begin{remark}
{\em Note that if $B$ can not be generated by $k$ elements, then the numbers $g_k(A,B)$ are not well
defined. It will however not matter what (finite) value we set for $g_k(A,B)$ in this case, and we
will use any convenient value (to keep our formulas uniform).
}
\end{remark}

\begin{proposition}
Let $A$ be an $R$-algebra with a nilpotent ideal $J$. If $B$ is a subalgebra of $A$ such that $B+J^2=A$
then $B=A$. 
\end{proposition}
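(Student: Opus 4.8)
The plan is to reduce everything to showing $J\subseteq B$, and then to prove the sharper statement that $J\subseteq B+J^k$ for every $k\ge 2$ by induction on $k$, invoking the nilpotence of $J$ only at the very end. First I would observe that the conclusion is equivalent to $J\subseteq B$: indeed $A=B+J^2\subseteq B+J\subseteq A$, so once $J\subseteq B$ we get $A\subseteq B\subseteq A$. Since $J$ is nilpotent, say $J^n=0$, it is then enough to prove $J\subseteq B+J^k$ for all $k$, for the choice $k=n$ gives $J\subseteq B+J^n=B$.

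The induction starts from the hypothesis, since $J\subseteq A=B+J^2$ is exactly the case $k=2$. The technical device that makes the induction run is the following upgrade of the inductive hypothesis: whenever $J\subseteq B+J^k$, one in fact has $J=(B\cap J)+J^k$, because writing $x\in J$ as $x=b+y$ with $b\in B$ and $y\in J^k\subseteq J$ forces $b=x-y\in J$, hence $b\in B\cap J$. Writing $C=B\cap J$, the crucial point is that $C$ lies simultaneously in $B$ and in $J$; this is what will compensate for the fact that $B$ is merely a subalgebra, so that one-sided products such as $BJ$ or $JB$ are not under control.

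For the inductive step I would raise the relation $J=C+J^k$ to the $k$-th power, so that
\[
J^k=(C+J^k)^k,
\]
and expand the right-hand side as a sum of length-$k$ words in the two ``letters'' $C$ and $J^k$. The single word $C^k$ lies in $B$, since $C\subseteq B$ and $B$ is closed under multiplication. Every other word contains at least one factor $J^k$ together with $k-1$ further factors, each contained in $J$ (as both $C$ and $J^k$ lie in $J$); such a word therefore lies in $J^{2k-1}\subseteq J^{k+1}$ for $k\ge 2$. Hence $J^k\subseteq B+J^{k+1}$, and substituting back gives $J=C+J^k\subseteq B+J^{k+1}$, which is precisely the case $k+1$.

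The main obstacle is exactly the lack of two-sided control over $B$: one cannot naively write $J^2=J\cdot J\subseteq (B+J^2)J=BJ+J^3$ and simplify, because $BJ$ need not lie in $B+J^3$. The intersection trick $C=B\cap J$ is designed to circumvent this, since replacing $B$ by $C$ in the cross terms raises their $J$-degree while the pure term $C^k$ stays inside $B$. With the induction complete, taking $k=n$ yields $J\subseteq B$, and the reduction of the first paragraph finishes the proof.
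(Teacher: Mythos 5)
Your proof is correct, and its engine is the same intersection trick the paper uses: both arguments hinge on $C=B\cap J$ together with the observation that $A=B+J^2$ forces $J=(B\cap J)+J^2$ (your base case, via your ``upgrade'' step, is literally the paper's starting identity). Where you diverge is in the inductive bookkeeping. The paper, writing $K=B\cap J$, runs a forward induction on the exact equality $J^m=K^m+J^{m+1}$, obtained by repeatedly multiplying by $J=K+J^2$, and then needs a second, backward induction from the top of the nilpotency filtration --- using $J^n=K^n$ and $K^{m+1}\subseteq K^m$ --- to squeeze out $J^m=K^m$ for all $m$, whence $J=K\subseteq B$. You instead run a single forward induction on the containment $J\subseteq B+J^k$, propagated by expanding $(C+J^k)^k$ into length-$k$ words: the pure word $C^k$ sits in $B$, and every mixed word has $J$-degree at least $2k-1\geq k+1$ for $k\geq 2$, so it is absorbed into $J^{k+1}$. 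This pushes the error term directly into $B$ rather than into powers of $K$, eliminates the backward induction entirely, and invokes nilpotence exactly once at the very end; indeed your estimate would even let the exponent jump from $k$ to $2k-1$, so your induction terminates faster than you claim. What the paper's version buys in exchange is the sharper structural conclusion $J^m=K^m$ for every $m$, not merely the containment $J\subseteq B$ needed for the proposition.
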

\begin{proof}
Let $K=B\cap J$. Then $K$ is a nilpotent ideal of $B$ and $J=K+J^2$. We claim that $J^m=K^m+J^{m+1}$
for all positive integers $m$. Indeed, this holds for $m=1$ and if it holds for some $m$ then
\[J^{m+1}=J\cdot J^m=(K+J^2)(K^m+J^{m+1})=\]
\[=K^{m+1}+J^2\cdot K^m+K\cdot J^{m+1}+J^{m+3}\subseteq
K^{m+1}+J^{m+2}\subseteq J^{m+1}
\]
so the result holds for $m+1$. Thus our claim follows by induction. Since $J$ is nilpotent,
let $n$ be largest such that $J^n\neq 0$. Then $J^n=K^n$ and by backward induction we conclude
that $J^m=K^m$ for all $m$. Thus $J=K$, so $J\subseteq B$ and $A=B+J^2\subseteq B$. 
\end{proof}

As an immediate corollary we get the following result.

\begin{corollary}\label{lifts}
If $A$ is an Artinian $R$-algebra and $b_1,\ldots,b_k$ generate the $R$-algebra $A/J(A)^2$ then
any lifts $a_1,\ldots, a_k$ of $b_1,\ldots, b_k$ to $A$ generate $A$ as an $R$-algebra.
In particular, if $A$ is finite then $g_k(A, A/J(A)^2)=|J(A)^2|^k$ for every positive integer $k$. 
\end{corollary}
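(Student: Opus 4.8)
The plan is to deduce Corollary~\ref{lifts} directly from the Proposition that immediately precedes it, applied to the nilpotent ideal $J=J(A)^2$. First I would recall that for an Artinian algebra $A$, the Jacobson radical $J(A)$ is nilpotent, hence so is $J(A)^2$; this lets us take $J=J(A)^2$ in the Proposition. Given lifts $a_1,\ldots,a_k\in A$ of generators $b_1,\ldots,b_k$ of $A/J(A)^2$, let $B$ be the $R$-subalgebra of $A$ they generate. The key point is that the composite $B\hookrightarrow A\twoheadrightarrow A/J(A)^2$ is surjective, since its image is an $R$-subalgebra containing all the $b_i$, which generate $A/J(A)^2$. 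Surjectivity is exactly the statement that $B+J(A)^2=A$, so the Proposition forces $B=A$, proving the first assertion.

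For the counting statement I would invoke the Gasch\"utz-type Theorem~\ref{gaschutz} with $B$ replaced by $A/J(A)^2$ and $f$ the quotient map. That theorem guarantees the existence of a well-defined lifting number $g_k(A,A/J(A)^2)$. The first part of the Corollary shows that \emph{every} one of the $|J(A)^2|^k$ set-theoretic lifts of a fixed generating $k$-tuple of $A/J(A)^2$ is in fact a generating $k$-tuple of $A$ --- there are $|\ker f|^k=|J(A)^2|^k$ such lifts because each coordinate can be adjusted independently by any element of the kernel $J(A)^2$. Hence the number of generating lifts equals the total number of lifts, giving $g_k(A,A/J(A)^2)=|J(A)^2|^k$.

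I do not anticipate a serious obstacle here, since the Corollary is essentially a repackaging of the Proposition together with the definition of $g_k$. The only points requiring a word of care are: verifying that $A$ finite (so that $g_k$ is literally a count of tuples and $|J(A)^2|$ is finite) makes the kernel count exact; and confirming that the hypothesis of the Proposition, namely $B+J^2=A$ where here $J^2$ should be read as $(J(A)^2)^2=J(A)^4$, is not what we need --- rather we apply the Proposition with its ``$J$'' set equal to our $J(A)^2$, so its hypothesis becomes $B+(J(A)^2)^2=A$. Thus the mild subtlety is that to apply the Proposition verbatim I would instead note that surjectivity onto $A/J(A)^2$ gives $B+J(A)^2=A$, and since $J(A)^4\subseteq J(A)^2$ this a fortiori yields $B+J(A)^4=A$, so the Proposition applies with $J=J(A)^2$ and concludes $B=A$. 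This bookkeeping about which power of the radical plays the role of ``$J$'' is the one place to state explicitly rather than leave implicit.
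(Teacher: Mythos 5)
There is one genuine slip, and it sits precisely at the point you flagged as the ``mild subtlety'': your bookkeeping resolves it the wrong way. You elect to apply the Proposition with its $J$ set equal to $J(A)^2$, so its hypothesis becomes $B+J(A)^4=A$, and you then claim this follows \emph{a fortiori} from $B+J(A)^2=A$ because $J(A)^4\subseteq J(A)^2$. That implication runs backwards: shrinking a summand can only shrink the sum, so the inclusion $J(A)^4\subseteq J(A)^2$ yields $B+J(A)^4\subseteq B+J(A)^2=A$, which is trivial, and emphatically not the equality $B+J(A)^4=A$ that the Proposition would require. (Indeed, $B+J(A)^2=A$ alone does not formally give $B+J(A)^4=A$ without already running an argument like the Proposition's.) The correct substitution --- the one the paper intends, and the one your first paragraph in effect uses before the last paragraph talks you out of it --- is to take the Proposition's $J$ to be $J(A)$ itself. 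Since $A$ is Artinian, $J(A)$ is nilpotent, so it is an admissible choice, and the Proposition's hypothesis $B+J^2=A$ then reads verbatim $B+J(A)^2=A$, which is exactly the surjectivity you correctly derived from the fact that the image of $B$ in $A/J(A)^2$ contains the generators $b_1,\ldots,b_k$. With that one change the first assertion closes immediately and coincides with the paper's reading of the Corollary as an immediate consequence of the Proposition.

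The counting half is fine as written: each coordinate of a lift varies independently over the kernel $J(A)^2$ of the quotient map, so a fixed generating $k$-tuple of $A/J(A)^2$ has exactly $|J(A)^2|^k$ lifts, and by the first part every one of them generates $A$, whence $g_k(A,A/J(A)^2)=|J(A)^2|^k$. (When $A/J(A)^2$ is not $k$-generated, the Remark following Theorem~\ref{gaschutz} allows the value of $g_k$ to be assigned by convention, so the phrase ``for every positive integer $k$'' causes no trouble.) In short: repair the single substitution error by taking $J=J(A)$ rather than $J=J(A)^2$, and your proof is the paper's proof.
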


\begin{theorem}\label{bisimple}
Let $A$ be a finite dimensional algebra over a field $F$ such that $J(A)^2=0$ and $J(A)$ is
simple as a $A/J(A)$-bimodule. Suppose furthermore that $A/J(A)$ is a separable $F$-algebra.
For $k\geq 2$, any sequence of $k$ elements which generate the $F$-algebra $A/J(A)$ can be lifted
to a sequence of $k$ elements in $A$ which generate $A$ as an $F$-algebra. If $A$ is commutative, the same holds for $k=1$. 

Suppose furthermore that $F$ is finite and $J(A)\neq 0$, and let $C$ be the intersection of $J(A)$ and the center of $A$.
Then $g_k(A, A/J(A))= |J(A)|^k- |J(A)/C|$.  
\end{theorem}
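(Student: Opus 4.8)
Write $J=J(A)$ and $\bar A=A/J$, and let $\pi\colon A\to\bar A$ be the quotient map. Since $\bar A$ is separable, the Wedderburn--Malcev theorem provides a subalgebra $S\subseteq A$ with $A=S\oplus J$ as $F$-spaces and $\pi|_S\colon S\to\bar A$ an isomorphism; I identify $\bar A$ with $S$ through this splitting, so the given generators $b_1,\dots,b_k$ of $\bar A$ become elements $s_1,\dots,s_k\in S$ and an arbitrary lift has the form $a_i=s_i+j_i$ with $(j_1,\dots,j_k)\in J^k$. The first step is to pin down exactly which lifts generate. If $B$ denotes the subalgebra generated by $a_1,\dots,a_k$, then $\pi(B)=\bar A$, hence $B+J=A$; writing any $a\in A$ as $b+j$ with $b\in B$, $j\in J$ and using $J^2=0$ one checks that $B\cap J$ is stable under multiplication by $A$ on both sides, i.e.\ it is an $\bar A$-subbimodule of $J$. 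Simplicity of $J$ then forces $B\cap J\in\{0,J\}$, so $B=A$ precisely when $B\cap J\neq 0$; equivalently, the lift fails to generate exactly when $B$ is a vector-space complement of $J$.

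The second step is to count the non-generating lifts. If $B\cap J=0$ then $B\oplus J=A$, so $\pi|_B$ is an isomorphism and $\sigma=(\pi|_B)^{-1}$ is an algebra section of $\pi$ with $\sigma(b_i)=a_i$; conversely every section $\sigma$ produces the non-generating lift $(\sigma(b_1),\dots,\sigma(b_k))$, and because the $b_i$ generate $\bar A$ these two assignments are mutually inverse. Thus non-generating lifts are in bijection with algebra sections of $\pi$. Every section has the form $\sigma(x)=x+d(x)$, where, using $J^2=0$, the homomorphism property translates exactly into $d$ being an $F$-derivation $\bar A\to J$; so sections correspond bijectively to $\mathrm{Der}(\bar A,J)$. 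Here separability enters decisively: since $\bar A$ is separable, $H^1(\bar A,J)=0$, so every derivation is inner, $d=d_c$ with $d_c(x)=xc-cx$ for some $c\in J$. The map $c\mapsto d_c$ is $F$-linear and surjects onto $\mathrm{Der}(\bar A,J)$, and (again using $J^2=0$, which makes every element of $J$ commute with $J$) its kernel is exactly $\{c\in J : cs=sc\ \forall s\in S\}=J\cap Z(A)=C$. Hence $\mathrm{Der}(\bar A,J)\cong J/C$ as $F$-spaces, and in the coordinates $(j_i)$ the non-generating lifts form an $F$-subspace $V\subseteq J^k$ with $\dim_F V=\dim_F(J/C)$.

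From this both conclusions are immediate. Over a finite $F$ the generating lifts are $J^k\setminus V$, of which there are $|J|^k-|J/C|$; since this count does not depend on the chosen generating tuple of $\bar A$, it equals $g_k(A,\bar A)$ by Theorem~\ref{gaschutz}. For the existence statement over an arbitrary field, a generating lift exists iff $V\neq J^k$, i.e.\ iff $\dim_F(J/C)<k\dim_F J$. When $J\neq 0$ and $k\ge 2$ this holds because $\dim_F(J/C)\le\dim_F J<2\dim_F J\le k\dim_F J$; and when $A$ is commutative we have $Z(A)=A$, so $C=J$ and $\dim_F(J/C)=0<\dim_F J$, which covers $k=1$ as well (the case $J=0$ being trivial).

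The main obstacle is the middle step: identifying the non-generating lifts with derivations and then counting those derivations. The payoff comes entirely from separability through the vanishing $H^1(\bar A,J)=0$, which collapses $\mathrm{Der}(\bar A,J)$ onto the inner derivations and lets me match its size with $|J/C|$ for the precise $C$ named in the statement; the simplicity hypothesis on $J$ is what guarantees that ``non-generating'' means ``complementary,'' so that this derivation count is the whole story.
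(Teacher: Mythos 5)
Your proof is correct and takes essentially the same route as the paper: both arguments reduce the problem to classifying the non-generating lifts as the image of the linear map $c\mapsto([s_1,c],\ldots,[s_k,c])$ on $J(A)$, identify its kernel with $C=J(A)\cap Z(A)$, and conclude by the same dimension/cardinality count. The only difference is in how that classification is justified: you invoke the vanishing of $H^1(A/J(A),J(A))$ for the separable quotient to make every algebra section of $A\to A/J(A)$ differ from the Wedderburn--Malcev splitting by an inner derivation, whereas the paper cites the conjugacy clause of Wedderburn--Malcev (any two complements are conjugate by $1-u$ with $u\in J(A)$), and since $J(A)^2=0$ these two statements coincide.
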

\begin{proof}
The result is obvious when $J(A)=0$, so we assume that $J(A)\neq 0$.
Recall the following fundamental result of Wedderburn and Malcev (see \cite{pierce}, Chapter 11)):

\begin{theorem}\label{split}
Let $F$ be field and $T$ a finite dimensional $F$-algebra with Jacobson radical $J$ such that $T/J$
is a separable $F$-algebra.
Then $T$ has a subalgebra $S$ such that $T=S\oplus J$. Moreover, any two such subalgebras are conjugate by a unit of the form $1-u$ for some $u\in J$. 
\end{theorem}
Choose a subalgebra $S$ of $A$ such that $A=S\oplus J(A)$. If $B$ is any subalgebra of $A$ such that 
$A=B+J(A)$ then $J(B)=B\cap J(A)$ and $J(B)$ is a $B/J(B)$-bimodule. Thus $A/J(A)=B/J(B)$ and $J(B)$
is a $A/J(A)$-bi-submodule of $J(A)$. Since $J(A)$ is simple, either $J(B)=0$ 
or $J(B)=J(A)$. In the latter case we have $A=B$, and in the former case $B=(1-u)S(1+u)$
for some $u\in J(A)$. 

Let $x_1,\ldots, x_k$ be elements of $A/J(A)$ which generate $A/J(A)$ as an $F$-algebra.
Let $s_i$ be the unique element of $S$ such that $x_i=s_i+J(A)$. Any lift of $x_1,\ldots, x_k$
to $A$ is of the form $s_1+u_1,\ldots, s_k+u_k$ for some $u_i\in J(A)$. 
We will show that $s_1+u_1,\ldots, s_k+u_k$ do not generate $A$ as an $F$-algebra if and only if there is $u\in J(A)$ such that $u_i=[s_i,u]$ for $i=1,\ldots,k$.  
Let $B$ be the $F$-subalgebra
of $A$ generated by $s_1+u_1,\ldots, s_k+u_k$. Then $A=B+J(A)$. If $B\neq A$ then
$B=(1-u)S(1+u)$ for some $u\in J(A)$.
It follows that $s_i+u_i=(1-u)s_i(1+u)=s_i+[s_i,u]$ (as both sides are elements of $B$ which lift $x_i$). Conversely, if $u_i=[s_i,u]$ for $i=1,\ldots,k$ and some $u\in J(A)$, then $B=(1-u)S(1+u)$
so $B\neq A$. 

Consider the map $\phi: J(A)\longrightarrow \prod_{i=1}^kJ(A)$ given by $\phi(u)= ([s_1,u],\ldots,[s_k,u])$. 
This is an $F$-linear map so it can not be onto when $k\geq 2$. When $A$ is abelian then this map
is trivial, so it is not onto even for $k=1$. This means that we can always choose $u_i\in J(A)$,
$i=1,\ldots,k$ such that $s_1+u_1,\ldots, s_k+u_k$ generate $A$ as an $F$-algebra. In other words,
$x_1,\ldots, x_k$ can be lifted to a sequence generating $A$ as an $F$-algebra.

Suppose now that $F$ is finite. Then the number of lifts of $x_1,\ldots, x_k$ to $A$
is equal to $|J(A)|^k$. The number of lifts which do not generate $A$ is equal to the cardinality
of the image of $\phi$. Note that the kernel of $\phi$ is exactly $C$, so the image of $\phi$
has cardinality $|J(A)/C|$. Thus $g_k(A, A/J(A))= |J(A)|^k- |J(A)/C|$.
\end{proof}

\section{Matrix rings over finite fields}
As we have seen in the previous section, computations of the number of generating tuples
of an algebra $A$ with finitely many elements can often be reduced to same computations for $A/J(A)$.
This in turn reduces to computations of generating tuples for finite simple algebras.
Any such an algebra is of the form $\M_n(F)$, where $F$ is a finite field.   
In this section we extend some results about generators of matrix
rings over finite fields obtained in \cite{kmp}. Let us first recall some of the results
from \cite{kmp}. Note that our notation here will differ slightly from the one introduced in \cite{kmp}. 

For a finite $R$-algebra $A$ and a positive integer $k$ we denote by $\g_k(A,R)$ the number
of $k$-tuples of elements in $A$ which generate $A$ as an $R$-algebra. If there is no ambiguity
about $R$ we simply write $\g_k(A)$. Recall that for a quotient $B$ of $A$ we defined $g_k(A,B)$
as the number of ways a $k$-tuple generating the $R$-algebra $B$ can be lifted to a $k$-tuple 
generating the $R$-algebra $A$ (note that we use an italic $g$ here, and not a roman $\g$). If there is a need to indicate $R$ explicitly, we will 
write $g_k(A,B,R)$ for $g_k(A,B)$. Thus $\g_k(A,R)=\g_k(B,R)g_k(A,B,R)$. To simplify
the notation, we set 
$\g_{k}(n,q,r)=\g_k(\mw{n}{q^r},\mathbb F_q)$ and $\g_{k}(n,q)=\g_{k}(n,q,1)$.
We can now state Theorem~6.3. from \cite{kmp}:

\begin{theorem}\label{formula}
Let $A=\M_n(\mathbb F_{q^s})$. Then $A^m$ can be generated
by $k$ elements as an $\mathbb F_{q}$-algebra iff
\[ m\leq \frac{\g_k(n,q,s)}{s|\text{\rm PGL}_n(\mathbb F_{q^s})|}.\]
Furthermore,
\[ \g_k(A^m,\mathbb F_{q})=\prod_{i=0}^{m-1}(\g_k(n,q,s)-i\cdot s\cdot |\text{\rm PGL}_n(\mathbb F_{q^s})|).\]
\end{theorem}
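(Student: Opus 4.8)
The plan is to put $A^m$ in the framework of Theorem~\ref{primary} and then convert the resulting generation criterion into an orbit count for the automorphism group of $A=\M_n(\mathbb F_{q^s})$. Since $A$ is a finite-dimensional simple $\mathbb F_q$-algebra, we have $J(A)=0$, so $A^m$ is a product of $m$ simple (in particular primary) algebras with isomorphic simple quotients, and in the notation of Theorem~\ref{primary} the field $F$ is just $\mathbb F_q$. Invoking Theorem~\ref{primary} in the semisimple case $J=0$ (i.e. \cite[Theorem~6.1]{kmp}), a tuple $(a_1,\ldots,a_k)$ with $a_\ell=(a_{\ell1},\ldots,a_{\ell m})$ generates $A^m$ as an $\mathbb F_q$-algebra if and only if each ``column'' $v_i:=(a_{1i},\ldots,a_{ki})\in A^k$ generates $A$, and for every $i\neq j$ there is no $\mathbb F_q$-algebra automorphism $\Psi$ of $A$ with $v_i=\Psi(v_j)$ (componentwise). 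Letting $G=\Aut_{\mathbb F_q}(A)$ act diagonally on $A^k$, this says exactly that $v_1,\ldots,v_m$ are generating $k$-tuples of $A$ lying in pairwise distinct $G$-orbits.

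Next I would compute $|G|$. Restricting an automorphism to the center $\mathbb F_{q^s}$ of $A$ gives a homomorphism $G\to \mathrm{Gal}(\mathbb F_{q^s}/\mathbb F_q)$; it is surjective because the entrywise Frobenius is an $\mathbb F_q$-algebra automorphism of $A$, and its kernel consists of the $\mathbb F_{q^s}$-algebra automorphisms, which by the Skolem--Noether theorem are all inner and hence form a group isomorphic to $\PGL_n(\mathbb F_{q^s})$. Therefore $|G|=s\,|\PGL_n(\mathbb F_{q^s})|$, which explains the quantity appearing in the statement.

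The crucial elementary observation is that $G$ acts \emph{freely} on the set $\mathcal G$ of generating $k$-tuples of $A$: an automorphism fixing a generating tuple fixes all of $A$ and is the identity. Hence every $G$-orbit in $\mathcal G$ has size $|G|=s\,|\PGL_n(\mathbb F_{q^s})|$, and the number of orbits is
\[
N=\frac{|\mathcal G|}{|G|}=\frac{\g_k(n,q,s)}{s\,|\PGL_n(\mathbb F_{q^s})|},
\]
a nonnegative integer. Counting ordered $m$-tuples $(v_1,\ldots,v_m)$ of elements of $\mathcal G$ in pairwise distinct orbits now proceeds step by step: $v_1$ is arbitrary, giving $\g_k(n,q,s)$ choices, and once $v_1,\ldots,v_i$ have been chosen in distinct orbits, $v_{i+1}$ must avoid those $i$ orbits, leaving $\g_k(n,q,s)-i\cdot s\,|\PGL_n(\mathbb F_{q^s})|$ choices. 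Multiplying over $i=0,\ldots,m-1$ yields the asserted formula for $\g_k(A^m,\mathbb F_q)$, and this product is nonzero precisely when its last factor is positive, i.e. when $m-1<N$, equivalently $m\le N$; for $m>N$ a factor vanishes and $A^m$ admits no generating $k$-tuple.

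The step demanding the most care is the identification of $G$ together with the verification that condition~(2) of Theorem~\ref{primary} is literally the statement that $v_i$ and $v_j$ lie in the same $G$-orbit; the Skolem--Noether argument and the extra Galois factor of $s$ are what distinguish the $\mathbb F_q$-structure from the $\mathbb F_{q^s}$-structure here. Once the automorphism group is pinned down and the freeness of the action is noted, everything else is routine combinatorics, and freeness is exactly what guarantees that the ratio $N$ is an integer, making the divisibility condition in the generation criterion meaningful.
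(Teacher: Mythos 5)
Your proof is correct and is essentially the argument behind \cite[Theorem~6.3]{kmp}, which the paper itself only quotes: you invoke the generation criterion of Theorem~\ref{primary} in the semisimple case $J(A)=0$, identify $\Aut_{\F_q}(\M_n(\F_{q^s}))$ via Skolem--Noether and the entrywise Frobenius as an extension of $\mathrm{Gal}(\F_{q^s}/\F_q)$ by $\PGL_n(\F_{q^s})$ of order $s\,|\PGL_n(\F_{q^s})|$, and use the free action on generating $k$-tuples to count $m$-tuples of columns in pairwise distinct orbits. All the key verifications (surjectivity onto the Galois group, freeness, and the integrality of $\g_k(n,q,s)/(s|\PGL_n(\F_{q^s})|)$ that makes the divisibility criterion meaningful) are carried out correctly, so there is nothing to add.
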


The following theorem summarizes the results of section 7 of \cite{kmp}.

\begin{theorem}\label{exactformula}
\begin{enumerate}[\rm (i)]
\item $\g_k(1,q)=q^k$.

\item $\g_{k}(2,q)=q^{2k+1}(q^{k-1}-1)(q^k-1)$.

\item $\g_{k}(3,q)= q^{3k+4}(q^{k-1}-1)(q^{k-1}+1)(q^k-1)(q^{3k-2}+q^{2k-2}-q^{k}-2q^{k-1}-q^{k-2}+q+1)$.

\item $\g_{k}(n,q)\geq q^{kn^2}-2^{\frac{n+6}{2}}q^{n^2k-(k-1)(n-1)}$.
\end{enumerate}
\end{theorem}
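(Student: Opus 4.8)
The plan is to interpret the count $\g_k(n,q)$ of $k$-tuples generating $\M_n(\mathbb F_q)$ through the module structure these tuples impose on $V=\mathbb F_q^n$, and then to enumerate the \emph{non}-generating tuples according to the way they fail. The basic dictionary, which I would establish first via the Jacobson density theorem, is this: the subalgebra $B$ generated by $(a_1,\dots,a_k)$ equals $\M_n(\mathbb F_q)$ if and only if $V$ is an absolutely simple $B$-module, i.e. $V$ is simple and $\End_B(V)=\mathbb F_q$. Consequently a tuple fails to generate in exactly one of two mutually exclusive ways: either $B$ fixes a proper nonzero subspace $W$ of $V$ (the reducible case), or $V$ is simple but $\End_B(V)=\mathbb F_{q^d}$ for some $d>1$ dividing $n$, in which case density forces $B$ to be a $\GL_n(\mathbb F_q)$-conjugate of the standard $\M_{n/d}(\mathbb F_{q^d})\subseteq\M_n(\mathbb F_q)$.

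Next I would count the two families separately. The reducible tuples are treated by inclusion--exclusion over the poset of invariant subspaces: the tuples fixing a given subspace of dimension $d$ are precisely those lying in its stabilizer, a parabolic subalgebra of $q^{d^2+d(n-d)+(n-d)^2}$ elements, and tuples fixing a chain of subspaces lie in the corresponding smaller parabolic. The irreducible-but-not-absolutely-simple tuples are counted by multiplying the number of conjugates of $\M_{n/d}(\mathbb F_{q^d})$ (a single $\GL_n(\mathbb F_q)$-orbit whose size is the index of the normalizer and involves the factor $|\Aut_{\mathbb F_q}(\M_{n/d}(\mathbb F_{q^d}))|=d\,|\PGL_{n/d}(\mathbb F_{q^d})|$ appearing in Theorem~\ref{formula}) by the number $\g_k(n/d,q,d)$ of tuples generating a single copy. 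This sets up a recursion on $n$.

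For parts (i)--(iii) I would run this enumeration explicitly. Part (i) is immediate, since every $k$-tuple of scalars already generates $\mathbb F_q$, giving $q^k$. For $n=2$ the only proper invariant subspaces are the $q+1$ lines and the only field-extension subalgebras are the copies of $\mathbb F_{q^2}$, so the inclusion--exclusion has only a handful of terms (stabilizers of one line, of two lines, and the scalars); after adding the $\mathbb F_{q^2}$-contribution it collapses to the stated product. The case $n=3$ is the same idea with more bookkeeping: one must organize invariant lines, invariant planes and the flags joining them, together with the $\mathbb F_{q^3}$-subalgebras.

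The real obstacle is the uniform lower bound (iv). Here I would bound the total number of non-generating tuples rather than compute it exactly. The dominant contribution comes from tuples stabilizing a line or a hyperplane: there are about $q^{n-1}$ of each, and each stabilizer has $q^{n^2-n+1}$ elements, so each of these two families has size about $q^{(n^2-n+1)k+(n-1)}=q^{n^2k-(k-1)(n-1)}$, which is exactly the order of the term being subtracted. The work is to show that everything else is absorbed into the constant $2^{(n+6)/2}$: one must estimate the Gaussian binomial coefficients counting $d$-dimensional subspaces against the rapid decay of the stabilizer cardinalities as $d$ moves away from $1$ and $n-1$, bound the field-extension contributions for each $d\mid n$, and control the inclusion--exclusion corrections. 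Summing the resulting geometric-type series over all subspace dimensions and all divisors of $n$ is the step where the explicit constant must be extracted, and is the most delicate part of the argument.
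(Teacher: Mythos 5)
You should first be aware that this paper contains no proof of Theorem~\ref{exactformula} at all: it is imported verbatim from the authors' earlier work, introduced only by the sentence ``The following theorem summarizes the results of section 7 of \cite{kmp}.'' So the only meaningful comparison is with the method of \cite{kmp}, and your outline is in substance that method: classify the failures of a $k$-tuple via the module structure on $V=\F_q^n$, count reducible tuples by inclusion--exclusion over stabilizers of invariant-subspace configurations, and count irreducible-but-not-absolutely-simple tuples inside conjugates of $\M_{n/d}(\F_{q^d})$. Your dictionary is correct (Jacobson density plus finite-dimensionality gives $B=\End_D(V)$ once $V$ is $B$-simple, and Wedderburn's little theorem forces $D=\F_{q^d}$ with $d\mid n$), the conjugates of $\M_{n/d}(\F_{q^d})$ do form a single $\GL_n(\F_q)$-orbit with normalizer of order $d\,|\PGL_{n/d}(\F_{q^d})|\,(q^d-1)$, matching the factor in Theorem~\ref{formula}, and your $n=2$ enumeration genuinely collapses as claimed: the reducible tuples number $(q+1)q^{3k}-\tfrac{q(q+1)}{2}(q^{2k}-q^k)-q^{k+1}$, the $\F_{q^2}$-type tuples number $\tfrac{q(q-1)}{2}(q^{2k}-q^k)$, and subtracting both from $q^{4k}$ gives
\begin{equation*}
q^{4k}-(q+1)q^{3k}+q^{2k+1}=q^{2k+1}(q^{k-1}-1)(q^k-1),
\end{equation*}
which is (ii). For (iv) your exponent bookkeeping is also right: $(n^2-n+1)k+(n-1)=n^2k-(k-1)(n-1)$, and the field-type families are subleading since $n^2+(k-1)n^2/p\leq kn^2-(k-1)(n-1)$ for every prime $p\mid n$.

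Two caveats keep this a strategy rather than a proof. First, part (iii) is where nearly all of the labor in \cite{kmp} sits: for $n=3$ the inclusion--exclusion runs over the full poset of invariant-subspace lattices (single lines, single planes, flags, non-incident line--plane pairs, several lines, etc.), and the ``exactly'' counts require a Möbius inversion you have not set up; asserting it as bookkeeping is fair as a plan but it is the bulk of the argument. Second, the specific constant $2^{(n+6)/2}$ in (iv) must survive the worst cases $q=2$ and small $n$, where the Gaussian-binomial constant $\prod_{i\geq 1}(1-2^{-i})^{-1}\approx 3.46$ and the middle-dimension terms ($2\leq d\leq n-2$, suppressed only by $q^{-(k-1)(d(n-d)-(n-1))}$) are least negligible; extracting exactly this constant is the one place where your sketch could silently lose a factor. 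Neither caveat is a wrong turn --- every structural step you propose is valid and consistent with how these formulas were actually established in \cite{kmp} --- but as written the proposal proves (i) and (ii), and only outlines (iii) and (iv).
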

The computation of the numbers $\g_k(n,q)$ for $n\geq 4$ remains one of the main outstanding
problems in this area. The main result of this section expresses the numbers
$\g_{k}(n,q,r)$ in terms of the numbers $\g_k(n,q)$. We start with the following proposition.

\begin{proposition}\label{fgen}
Let $A$ be a subring of $\mt{n}$. Then $\mathbb F_q A=\mt{n}$ if and only if $A$ is conjugate
to $\mw{n}{\pi}$ for some subfield $\mathbb F_{\pi}$ of $\mathbb F_q$.
\end{proposition}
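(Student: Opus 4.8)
I would prove the ``if'' direction by inspection and reduce the ``only if'' direction to the Artin--Wedderburn structure theory, extracting the conjugacy at the very end from the uniqueness of the simple module. For ``if'', suppose $A=g\,\mw{n}{\pi}\,g^{-1}$ with $g\in\GL_n(\mathbb F_q)$ and $\mathbb F_\pi$ a subfield of $\mathbb F_q$. The matrix units $E_{ij}$ lie in $\mw{n}{\pi}$ and form an $\mathbb F_q$-basis of $\mt{n}$, so $\mathbb F_q\,\mw{n}{\pi}=\mt{n}$; as conjugation by $g$ is an $\mathbb F_q$-algebra automorphism of $\mt{n}$, it follows that $\mathbb F_q A=\mt{n}$.

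For the converse, $A$ is a finite, hence Artinian, ring. First I would show that $A$ is semisimple. Setting $N=J(A)$, the $\mathbb F_q$-span $\mathbb F_q N$ is a two-sided ideal of $\mathbb F_q A=\mt{n}$: from $AN,NA\subseteq N$ one gets $\mt{n}\cdot\mathbb F_q N=\mathbb F_q(AN)\subseteq\mathbb F_q N$ and likewise on the right. It is nilpotent because $(\mathbb F_q N)^m=\mathbb F_q N^m$, so the simplicity of $\mt{n}$ forces $\mathbb F_q N=0$ and hence $N=0$. Next I would show that $A$ is simple: any central idempotent $e$ of $A$ (and in particular $1_A$) centralizes $A$, therefore centralizes $\mathbb F_q A=\mt{n}$, so $e\in Z(\mt{n})=\mathbb F_q I$; an idempotent scalar matrix is $0$ or $I$, which rules out nontrivial central idempotents and forces $1_A=I$. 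Hence $A$ is a finite simple ring, so by Artin--Wedderburn together with Wedderburn's theorem on finite division rings, $A\cong\M_d(\mathbb F_\pi)$ for some integer $d$ and some finite field $\mathbb F_\pi$.

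Next I would identify $\mathbb F_\pi$ and determine $d$. The center $Z(A)\cong\mathbb F_\pi$ likewise centralizes $\mt{n}$, so $Z(A)\subseteq\mathbb F_q I$; thus $\mathbb F_\pi$ is the unique subfield of $\mathbb F_q$ of its order and $Z(A)=\mathbb F_\pi I$. Because the scalars commute with $A$ and $\mathbb F_\pi$ sits inside $A$ as the central subfield $\mathbb F_\pi I$, multiplication yields an $\mathbb F_q$-algebra homomorphism $\Theta\colon\mathbb F_q\otimes_{\mathbb F_\pi}A\to\mt{n}$ with image $\mathbb F_q A=\mt{n}$. Since $\mathbb F_q\otimes_{\mathbb F_\pi}\M_d(\mathbb F_\pi)\cong\M_d(\mathbb F_q)$ is simple, $\Theta$ is a surjection of simple algebras, hence an isomorphism, and comparing $\mathbb F_q$-dimensions gives $d=n$. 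In particular $A\cong\mw{n}{\pi}$.

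The hard part is the conjugacy, which I would obtain module-theoretically. Through $\Theta$ the space $V=\mathbb F_q^{\,n}$ becomes the unique simple module of $\mathbb F_q\otimes_{\mathbb F_\pi}A$. Fixing an isomorphism $\phi\colon A\xrightarrow{\sim}\mw{n}{\pi}$ and using the standard inclusion $\mw{n}{\pi}\subseteq\mt{n}$ turns $V$ into a second $\mathbb F_q\otimes_{\mathbb F_\pi}A$-module $V_0$, again the unique simple one, so there is an isomorphism $g\colon V\to V_0$. It commutes with the action of $\mathbb F_q\otimes1=\mathbb F_q I$, whence $g\in\GL_n(\mathbb F_q)$, and with the action of $1\otimes a$, whence $g\,a\,g^{-1}=\phi(a)$ for every $a\in A$; therefore $gAg^{-1}=\mw{n}{\pi}$. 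The one subtlety to check is that $\Theta$ is well defined over $\mathbb F_\pi$, i.e.\ that for $z\in\mathbb F_\pi$ the scalar $z\in\mathbb F_q$ and the central element $zI\in Z(A)$ act in the same way, which is precisely the identification $Z(A)=\mathbb F_\pi I$ established above.
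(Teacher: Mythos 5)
Your proof is correct and follows essentially the same route as the paper: show the center of $A$ lies in the scalars $\mathbb F_q$, kill $J(A)$ by observing that $\mathbb F_q J(A)$ is a nilpotent ideal of the simple ring $\M_n(\mathbb F_q)$, deduce $A\cong\M_d(\mathbb F_\pi)$ by Wedderburn, and show the base-change map $\mathbb F_q\otimes_{\mathbb F_\pi}A\to\M_n(\mathbb F_q)$ is an isomorphism forcing $d=n$. The only cosmetic difference is at the end: where the paper cites the Noether--Skolem theorem to conjugate $A$ onto $\M_n(\mathbb F_\pi)$, you instead compare the two module structures on $\mathbb F_q^n$ and invoke uniqueness of the simple module --- which is exactly the standard proof of Noether--Skolem for matrix algebras, so the substance is the same.
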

\begin{proof}
Let $Z$ be the center of $A$. We claim that $Z\subseteq \mathbb F_q$. Indeed,
the assumption that $\mathbb F_q A=\mt{n}$ implies that any element central in $A$
remains central in $\mt{n}$. Thus $Z= \mathbb F_{\pi}$ is a subfield of $\mathbb F_q$.
Let $J$ be the Jacobson radical of $A$. Then $J$ is nilpotent and $\mathbb F_q J$ is a nilpotent
ideal of $\mt{n}$. It follows that $\mathbb F_q J=0$, and consequently $J=0$. This proves
that $A$ is semisimple. Since the center of $A$ is a field, $A$ is simple.  Thus $A$ is isomorphic
as $\mathbb F_{\pi}$-algebra
to $\mw{r}{\pi}$ for some $r$. The embedding $A\hookrightarrow \mt{n}$ extends
to an $\mathbb F_q$-algebra homomorphism $f: A\otimes_{\mathbb F_{\pi}}\mathbb F_q\longrightarrow \mt{n}$,
which is surjective by the assumption that $\mathbb F_q A=\mt{n}$. On the other hand,
$A\otimes_{\mathbb F_{\pi}}\mathbb F_q\simeq \mw{r}{q}$ is a central simple $\mathbb F_q$-algebra,
so $f$ is injective. It follows that $f$ is an isomorphism and $r=n$. There is an $\mathbb F_{\pi}$-algebra
isomorphism $g:A\longrightarrow \mw{n}{\pi}$ which extends to an $\mathbb F_q$-algebra isomorphism
$g^*: A\otimes_{\mathbb F_{\pi}}\mathbb F_q\longrightarrow \mt{n}$. By the Noether-Skolem theorem,
there is an invertible matrix $u\in \mt{n}$ such that $f(x)=ug^{*}(x)u^{-1}$ for all
$x\in A\otimes_{\mathbb F_{\pi}}\mathbb F_q$. It follows that $A=u\mw{n}{\pi}u^{-1}$.

Conversely, if $A=u\mw{n}{\pi}u^{-1}$ then clearly $\mathbb F_q A=\mt{n}$.
\end{proof}

\begin{theorem}\label{mobius}
\[\frac{\g_{m}(n,q,r)}{|\PGL_n(\mathbb F_{q^r})|}=
\sum_{s|r}\frac{\g_{m}(n,q^s)}{|\PGL_n(\mathbb F_{q^s})|}\mu(r/s).
\]
\end{theorem}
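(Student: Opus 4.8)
The plan is to read the asserted equality as a Möbius inversion and instead prove its equivalent ``summatory'' form. Writing $f(r)=\g_m(n,q,r)/|\PGL_n(\mathbb{F}_{q^r})|$ and $g(s)=\g_m(n,q^s)/|\PGL_n(\mathbb{F}_{q^s})|$, the statement reads $f(r)=\sum_{s\mid r}g(s)\mu(r/s)$, which by Möbius inversion is equivalent to $g(r)=\sum_{s\mid r}f(s)$, i.e.
\[
\g_m(n,q^r)=\sum_{s\mid r}\frac{|\PGL_n(\mathbb{F}_{q^r})|}{|\PGL_n(\mathbb{F}_{q^s})|}\,\g_m(n,q,s).
\]
I would prove this identity by counting, according to a natural partition, the $m$-tuples $(a_1,\dots,a_m)\in\M_n(\mathbb{F}_{q^r})^m$ that generate $\M_n(\mathbb{F}_{q^r})$ as an $\mathbb{F}_{q^r}$-algebra; by definition there are exactly $\g_m(n,q^r)$ of them.

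The partition is by the $\mathbb{F}_q$-subalgebra the tuple generates. Given such a tuple, set $A=\mathbb{F}_q\langle a_1,\dots,a_m\rangle$. Since the tuple generates $\M_n(\mathbb{F}_{q^r})$ over $\mathbb{F}_{q^r}$ we have $\mathbb{F}_{q^r}A=\M_n(\mathbb{F}_{q^r})$, so Proposition~\ref{fgen} (applied with base field $\mathbb{F}_{q^r}$) shows that $A$ is $\GL_n(\mathbb{F}_{q^r})$-conjugate to $\M_n(\mathbb{F}_\pi)$ for some subfield $\mathbb{F}_\pi\subseteq\mathbb{F}_{q^r}$. As $A$ contains the central scalars $\mathbb{F}_qI$, we get $\mathbb{F}_q\subseteq Z(A)=\mathbb{F}_\pi$, hence $\mathbb{F}_\pi=\mathbb{F}_{q^s}$ with $s\mid r$, and $s$ is determined by $A$ through $|Z(A)|$. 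Conversely, whenever $A$ is conjugate to $\M_n(\mathbb{F}_{q^s})$ for some $s\mid r$, Proposition~\ref{fgen} gives $\mathbb{F}_{q^r}A=\M_n(\mathbb{F}_{q^r})$. Thus the tuples split into disjoint blocks indexed by $s\mid r$, and $\g_m(n,q^r)=\sum_{s\mid r}T_s$, where $T_s$ is the number of tuples whose generated $\mathbb{F}_q$-algebra is conjugate to $\M_n(\mathbb{F}_{q^s})$.

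Next I would evaluate $T_s$. For a fixed conjugate $B=u\M_n(\mathbb{F}_{q^s})u^{-1}$, the tuples in this block with $A=B$ are precisely the tuples lying in $B$ that generate $B$ as an $\mathbb{F}_q$-algebra; since conjugation by $u$ is an $\mathbb{F}_q$-algebra isomorphism $\M_n(\mathbb{F}_{q^s})\xrightarrow{\sim}B$, there are $\g_m(\M_n(\mathbb{F}_{q^s}),\mathbb{F}_q)=\g_m(n,q,s)$ of them. Distinct conjugates contribute disjoint tuple sets because a tuple determines $A=B$ uniquely, so $T_s=c_s\,\g_m(n,q,s)$, where $c_s$ is the number of $\GL_n(\mathbb{F}_{q^r})$-conjugates of $\M_n(\mathbb{F}_{q^s})$ inside $\M_n(\mathbb{F}_{q^r})$. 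Substituting and inverting, the whole theorem reduces to the single identity $c_s=|\PGL_n(\mathbb{F}_{q^r})|/|\PGL_n(\mathbb{F}_{q^s})|$.

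The main obstacle is precisely this orbit count, which I would obtain as the index of the normalizer $N$ of $S:=\M_n(\mathbb{F}_{q^s})$ in $\GL_n(\mathbb{F}_{q^r})$. If $u$ normalizes $S$, conjugation by $u$ fixes $Z(S)=\mathbb{F}_{q^s}I$ pointwise (scalars are fixed under conjugation), so it restricts to an $\mathbb{F}_{q^s}$-algebra automorphism of $S$; by the Noether--Skolem theorem this automorphism is inner, realized by some $v\in\GL_n(\mathbb{F}_{q^s})$, whence $v^{-1}u$ centralizes $S$. Because $\M_n(\mathbb{F}_{q^r})=S\otimes_{\mathbb{F}_{q^s}}\mathbb{F}_{q^r}$, the centralizer of $S$ is $\mathbb{F}_{q^r}I$, giving $N=\GL_n(\mathbb{F}_{q^s})\cdot\mathbb{F}_{q^r}^*I$. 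Since $\GL_n(\mathbb{F}_{q^s})\cap\mathbb{F}_{q^r}^*I=\mathbb{F}_{q^s}^*I$, we get $|N|=|\GL_n(\mathbb{F}_{q^s})|(q^r-1)/(q^s-1)$, and therefore
\[
c_s=[\GL_n(\mathbb{F}_{q^r}):N]=\frac{|\GL_n(\mathbb{F}_{q^r})|\,(q^s-1)}{|\GL_n(\mathbb{F}_{q^s})|\,(q^r-1)}=\frac{|\PGL_n(\mathbb{F}_{q^r})|}{|\PGL_n(\mathbb{F}_{q^s})|}.
\]
Everything outside this normalizer computation is bookkeeping: the delicate points are the clean identification of the centralizer of $S$ via the tensor decomposition and the appeal to Skolem--Noether to pin down $N$, after which the factor $(q^r-1)/(q^s-1)$ collapses the ratio of $\GL$-orders into the desired ratio of $\PGL$-orders.
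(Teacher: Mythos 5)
Your proposal is correct and follows essentially the same route as the paper's proof: partition the generating $m$-tuples of $\M_n(\mathbb{F}_{q^r})$ by the $\mathbb{F}_q$-subalgebra they generate, identify these via Proposition~\ref{fgen} as conjugates of $\M_n(\mathbb{F}_{q^s})$ with $s\mid r$, count the conjugates by the Noether--Skolem normalizer computation yielding $|\GL_n(\mathbb{F}_{q^s})|(q^r-1)/(q^s-1)$, and finish with M\"obius inversion. The only cosmetic difference is that you invert at the outset while the paper derives the summatory identity first and inverts at the end.
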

\begin{proof}
Note that
an $m$-tuple of elements in $\mw{n}{q^r}$ generates  $\mw{n}{q^r}$ as an $\mathbb F_{q^r}$-algebra
if and only the $\mathbb F_q$-algebra $A$ it generates satisfies $\mathbb F_{q^r}A=\mw{n}{q^r}$.
Let $\Phi$ be the collection of all $\mathbb F_q$-subalgebras $A$ of $\mw{n}{q^r}$
such that $\mathbb F_{q^r}A=\mw{n}{q^r}$.  Thus
\[\g_{m}(n,q^r)=\sum_{A\in \Phi}\g_m(A,\mathbb F_q).
\]
By Proposition~\ref{fgen}, $A\in\Phi$ if and only if $A$ is conjugate to $\mw{n}{q^s}$ for some divisor $s$ of $r$.
Note that if $C\in \GL_n(\mathbb F_{q^r})$ satisfies $C\mw{n}{q^s}C^{-1}=\mw{n}{q^s}$, then
conjugation by $C$ induces $\mathbb F_{q^s}$-algebra automorphism of $\mw{n}{q^s}$. Since all such automorphisms
are inner by the Noether-Skolem theorem, there is $D\in\GL_n(\mathbb F_{q^s})$ such that $D^{-1}C$ centralizes
$\mw{n}{q^s}$. It follows that $D^{-1}C\in \mathbb F_{q^r}$. Consequently, the number of
matrices $C$ which stabilize $\mw{n}{q^s}$ is equal to $|\GL_n(\mathbb F_{q^s})|(q^r-1)/(q^s-1)$. The number of
elements in $\Phi$ which are conjugate to $\mw{n}{q^s}$ is then equal to
$\displaystyle \frac{|\GL_n(\mathbb F_{q^r})|(q^s-1)}{|\GL_n(\mathbb F_{q^s})|(q^r-1)}$. Note that if $A$ is conjugate
to $\mw{n}{q^s}$ then $\g_m(A,\mathbb F_q)=\g_{m}(n,q,s)$. Thus we get the following formula.
\[\g_{m}(n,q^r)=\sum_{s|r}\g_{m}(n,q,s)\frac{|\GL_n(\mathbb F_{q^r})|(q^s-1)}{|\GL_n(\mathbb F_{q^s})|(q^r-1)}.
\]
Thus
\[\frac{\g_{m}(n,q^r)}{|\PGL_n(\mathbb F_{q^r})|}=
\sum_{s|r}\frac{\g_{m}(n,q,s)}{|\PGL_n(\mathbb F_{q^s})|}
\]
By M\"obius inversion formula, we get
\[\frac{\g_{m}(n,q,r)}{|\PGL_n(\mathbb F_{q^r})|}=
\sum_{s|r}\frac{\g_{m}(n,q^s)}{|\PGL_n(\mathbb F_{q^s})|}\mu(r/s).
\]
\end{proof}

We will need the following technical result.

\begin{lemma}\label{estimat}
For any positive integers $r,t,n$ such that $n\neq 2$ there is a constant $c(n,r,t)$ such that 
\[ \g_2(\M_{n}(\F_{q^r})^{t}, \F_q)\geq q^{2n^2rt}\left(1-\frac{25t}{q^2} \right)^t\]
for all $q\geq c(n,r,t)$.
\end{lemma}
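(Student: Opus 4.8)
The plan is to apply Theorem~\ref{formula} with $A=\M_n(\F_{q^r})$, $m=t$, $k=2$ and $s=r$, which identifies the quantity to be bounded with an explicit product:
\[
\g_2(\M_n(\F_{q^r})^t,\F_q)=\prod_{i=0}^{t-1}\big(\g_2(n,q,r)-i\,r\,|\PGL_n(\F_{q^r})|\big).
\]
Everything then reduces to two tasks: a sharp enough lower bound for the leading factor $\g_2(n,q,r)$, and control of the arithmetic progression of correction terms $i\,r\,|\PGL_n(\F_{q^r})|$. The strategy is to show that, for $q\geq c(n,r,t)$, each of the $t$ factors is at least $q^{2n^2r}\big(1-25t/q^2\big)$. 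Since these factors are positive and decreasing in $i$, the smallest of them occurring at $i=t-1$, the estimate for $i=t-1$ propagates to all of them, and multiplying the $t$ copies yields the claimed bound $q^{2n^2rt}(1-25t/q^2)^t$.

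First I would estimate $\g_2(n,q,r)$ from below. By Theorem~\ref{mobius}, separating off the divisor $s=r$ (which contributes exactly $\g_2(n,q^r)$),
\[
\g_2(n,q,r)=\g_2(n,q^r)+|\PGL_n(\F_{q^r})|\sum_{\substack{s\mid r\\ s<r}}\mu(r/s)\frac{\g_2(n,q^s)}{|\PGL_n(\F_{q^s})|}.
\]
For the leading term I would invoke Theorem~\ref{exactformula}: for $n\geq 3$, part (iv) with $q$ replaced by $q^r$ and $k=2$ gives $\g_2(n,q^r)\geq q^{2n^2r}-2^{(n+6)/2}q^{r(2n^2-n+1)}$, a relative error of order $q^{-r(n-1)}$, while for $n=1$ part (i) gives the exact value $q^{2r}$. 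Using $\g_2(n,q^s)\leq q^{2n^2s}$ and the fact that $|\PGL_n(\F_{q^r})|/|\PGL_n(\F_{q^s})|$ has order $q^{(n^2-1)(r-s)}$, each term of the sum over proper divisors is of order $q^{-(n^2+1)(r-s)}$ relative to $q^{2n^2r}$, hence $O(q^{-(n^2+1)})=O(q^{-2})$ and negligible against the main error. Collecting these, $\g_2(n,q,r)\geq q^{2n^2r}(1-E)$ where $q^2E\leq 2^{(n+6)/2}q^{2-r(n-1)}+o(1)$ as $q\to\infty$.

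The hypothesis $n\neq 2$ enters exactly here. For $(n,r)=(3,1)$ the coefficient $2^{(n+6)/2}q^{2-r(n-1)}$ equals the constant $2^{9/2}=16\sqrt2<23$, while for every other pair with $n\geq 3$ the exponent $2-r(n-1)$ is negative, so the coefficient is $o(1)$; for $n=1$ one checks from the Möbius expansion that $q^2E\leq 1+o(1)$. In all cases $q^2E\leq 23+o(1)$. When $n=2$, by contrast, Theorem~\ref{exactformula}(ii) gives $\g_2(2,q)=q^8-q^7-\cdots$, a relative error of order $q^{-1}$ that cannot be absorbed into a bound decaying like $q^{-2}$, which is why $n=2$ is excluded. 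Granting the lower bound, the per-factor estimate at $i=t-1$ reads $\g_2(n,q,r)-(t-1)r\,|\PGL_n(\F_{q^r})|\geq q^{2n^2r}-25t\,q^{2n^2r-2}$, and after substituting $\g_2(n,q,r)\geq q^{2n^2r}(1-E)$ and dividing by $q^{2n^2r-2}$ this becomes
\[
25t\geq q^2E+(t-1)r\,|\PGL_n(\F_{q^r})|\,q^{2-2n^2r}.
\]
The second summand is $O(t\,q^{2-r(n^2+1)})\to 0$ (and equals $t-1$ in the degenerate case $n=r=1$, still harmless), and the first is at most $23+o(1)$; since $25t\geq 25$, the inequality holds once $q\geq c(n,r,t)$, which also guarantees positivity of every factor and legitimizes the product bound.

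The main obstacle is the middle step: pinning down the constant in the lower bound for $\g_2(n,q,r)$ tightly enough that it remains below $25$. This requires tracking simultaneously the leading-term estimate of Theorem~\ref{exactformula}(iv) and the Möbius correction terms of Theorem~\ref{mobius}, and it is precisely the computation that singles out $(n,r)=(3,1)$ as the extremal case and makes the exclusion of $n=2$ unavoidable.
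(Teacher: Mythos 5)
Your proposal is correct and follows essentially the same route as the paper: Theorem~\ref{formula} to factor the count, Theorem~\ref{mobius} together with Theorem~\ref{exactformula} to bound $\g_2(n,q,r)$ from below, and a per-factor estimate (worst case $i=t-1$) multiplied out over the $t$ factors, with the same identification of $(n,r)=(3,1)$ and its coefficient $2^{9/2}<25$ as the extremal case excluded only when $n=2$. The paper merely makes your asymptotic $o(1)$ bookkeeping explicit, e.g.\ via the inequality $\frac{a^y-1}{a^x-1}\leq\frac{y}{x}a^{y-x}$ giving the correction term $r^{n}q^{\frac{3}{2}r(n^2-1)+r}$ and the threshold $q>\max(tr,r^n,25)$.
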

\begin{proof}
When $n=1=r$,  Theorem~\ref{formula} and Theorem~\ref{exactformula}(i) yield  
\[ \g_2((\F_{q})^{t}, \F_q)=\prod_{i=0}^{t-1}(q^2-i)\geq q^{2t}\left(1-\frac{25t}{q^2} \right)^t.
\] 
Suppose now that either $n\geq 3$ or $n=1$ and $r\geq 2$. By Theorem~\ref{mobius},
\[ \g_2(n,q,r)\geq \g_2(n,q^r)-\sum_{s|r, s<r}q^{2sn^2}\frac{|\PGL_n(\mathbb F_{q^r})|}{ |\PGL_n(\mathbb F_{q^s})|}.\]
Note that 
\[ \frac{a^y-1}{a^x-1}\leq \frac{y}{x}a^{y-x}\]
for any $a>1$ and $0<x<y$ (for a simple proof use the Mean Value Theorem).
Using this inequality, we get
\[\frac{|\PGL_n(\mathbb F_{q^r})|}{|\PGL_n(\mathbb F_{q^s})|}=\frac{q^s-1}{q^r-1}\prod_{i=0}^{n-1}
\frac{q^{rn}-q^{ri}}{q^{sn}-q^{si}}\leq \prod_{i=0}^{n-1}q^{(r-s)i}\prod_{i=0}^{n-2}
\frac{r}{s}q^{(r-s)(n-i)}=\left(\frac{r}{s}\right)^{n-1}q^{(r-s)(n^2-1)}.\]
Since $1\leq s\leq r/2$, we have
\[\g_2(n,q,r)\geq \g_2(n,q^r)-r^{n}q^{\frac{3}{2}r(n^2-1)+r}.\]
When $n=1$, $r>1$, we get by Theorem~\ref{exactformula}(i) that 
\[ \g_2(1,q,r)\geq q^{2r}-rq^{r}\]
and using Theorem~\ref{formula} we conclude that
\[ \g_2((\F_{q^r})^{t}, \F_q)=\prod_{i=0}^{t-1}(\g_2(1,q,r)-ir)\geq q^{2rt}\left(1-\frac{2r}{q^r}  \right)^t\geq q^{2rt}\left(1-\frac{4}{q^2}  \right)^t\geq q^{2rt}\left(1-\frac{25t}{q^2}  \right)^t\]
for $q>\max(25,t,r)$.

Suppose now that $n\geq 3$. By Theorem~\ref{exactformula}(iv) we have
\[\g_2(n,q,r)\geq q^{2rn^2}-2^{(n+6)/2}q^{(2n^2-n+1)r}-r^{n}q^{\frac{3}{2}r(n^2-1)+r}.\]
Note that $|\PGL_n(\mathbb F_{q^r})|\leq q^{r(n^2-1)}$. Hence, when $q>\max(tr,r^n) $ and $0\leq i<t$
we have
\[\g_2(n,q,r)-ir|\PGL_n(\mathbb F_{q^r})|\geq q^{2rn^2}-2^{(n+6)/2}q^{(2n^2-n+1)r}-r^{n}q^{\frac{3}{2}r(n^2-1)+r}-irq^{r(n^2-1)}\geq\]
\[\geq q^{2rn^2}\left(1-\frac{2^{9/2}}{q^{2r}}\left(\frac{2^{1/2}}{q^r}\right)^{n-3}-
\frac{1}{q^{2r}}\frac{r^n}{q^{\frac{1}{2}r(n^2-3)}}-\frac{1}{q^{2r}}\frac{ir}{q^{r(n^2-1)}}     \right) \geq q^{2rn^2}\left(1-\frac{25t}{q^2}\right).
\]
Using Theorem~\ref{formula} we see that 
\[\g_2(\M_{n}(\F_{q^r})^{t}, \F_q)\geq q^{2n^2rt}\left(1-\frac{25t}{q^2} \right)^t \]
provided $q>\max(tr,r^n, 25)$.
\end{proof}

\section{Maximal orders in simple algebras}
In this section we review some facts about orders in simple algebras. All the results and their proofs can be found in the excellent book \cite{reiner}.

Let $R$ be a Dedekind domain with field of fractions $K$ and let $H$ be a central simple
$K$-algebra. Let $\Lambda$ be a maximal order in $H$ and let $P$ be a maximal ideal of $R$.
Our goal is to get some understanding of algebras of the form $\Lambda/P^t\Lambda$.
Let us consider the $P$-completions $\Lambda_{P}$, $H_{P}$, $K_{P}$, $R_{P}$ of $\Lambda$,
$H$, $K$ and $R$ respectively. Then $\Lambda_{P}$ is a maximal order in the central
simple $K_{P}$-algebra $H_{P}$. $R_P$ is a complete discrete valuation ring with maximal ideal $\hat{P}$ and $\Lambda/P^t\Lambda=\Lambda_P/\hat{P}^t\Lambda_P$. 

There are a positive integer $r_P=r$, a finite dimensional central division
algebra $D$ over $K_P$ with the unique maximal order $\Delta$, and an isomorphism
$H_P\cong\M_r(D)$ which identifies $\Lambda_P$ with $\M_r(\Delta)$. We call $r_P$
the {\bf local capacity} of $H$ at $P$. The index of $D$ is denoted by $m_P=m$ and
called the {\bf local index} of $H$ at $P$ (so the dimension of $D$ over $K_P$ is $m_P^2$). 
We say that $P$ is {\bf ramified} in $H$ if $m_P>1$, and {\bf unramified} if $m_P=1$.
Furthermore, there is an element $\pi\in \Delta$ such that
ideals in $\Delta$ are exactly the subsets of the form $\pi^j\Delta$, $j=1,2,\ldots$ (so, in particular, each one-sided ideal in $\Delta$ is two-sided). Now $\hat{P}\Delta=\pi^e\Delta$,
where $e=e_P$ is called the {\bf ramification index} of $D$ over $K_P$. The residue ring
$\overline{\Delta}=\Delta/\pi\Delta$ is a division algebra over the field $R_P/\hat{P}\cong R/P$. 
The dimension of $\overline{\Delta}$ over $R/P$ is denoted by $f=f_P$ and called the {\bf inertial}
degree of $D$ over $K_P$. Note that $e_Pf_P=m_P^2$ (also $e_P|m_P$ and $m_P|f_P$). 

Suppose in addition that $R/P\cong \mathbb F_q$ is a finite field. Then $e_P=f_P=m_P$ and $\overline{\Delta}\cong \mathbb F_{q^m}$. Moreover, $\Delta$ contains a primitive $(q^m-1)$th root
of 1, denoted $\omega$. Given any generator of $\hat{P}$, one can choose $\pi$ so that $\pi^m\in \hat{P}$ is the given generator and $\pi\omega=\omega^{q^s}\pi$, where $s$ is a positive integer
independent of all the choices (of $\omega$, $\pi$, etc.) and such that $1\leq s \leq m$, $(s,m)=1$.
We have $\Delta=R[\omega, \pi]$. The fraction $s/m$ is called the {\bf Hasse invariant} of $D$.

\begin{theorem}\label{simple}
Let $R$ be a Dedekind domain with field of fractions $K$ and let $H$ be a central simple
$K$-algebra. Let $\Lambda$ be a maximal order in $H$ and let $P$ be a maximal ideal of $R$.
Let $t$ a positive integer and let $A=\Lambda/P^t\Lambda$. We use the notation introduced above. Then

\begin{enumerate}[{\rm (i)}]
\item $A$ is an Artinian primary $R$-algebra, finitely generated as an $R$-module, and $A/J(A)\cong \M_r(\overline{\Delta})$, where $r$ is the local capacity of $H$ at $P$.

\item $J(A)/J(A)^2$ is simple as $A/J(A)$-bimodule. 

\item Suppose in addition that $R/P\cong \mathbb F_q$ is finite. Then  $A/J(A)\cong \M_r(\mathbb F_{q^m})$, where $m$ is the local index of $H$ at $P$. Moreover, if
\[ C:=\{x\in J(A)/J(A)^2: ax=xa \ \text{for all}\ a\in A/J(A)\} \] 
then
\[ |C|=\begin{cases} 1,& \text{if}\  m>1;\\
q,& \text{if} \ m=1.
\end{cases}\]
\end{enumerate}
\end{theorem}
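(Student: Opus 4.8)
The plan is to pass to the $P$-adic completion, where everything becomes an explicit matrix ring over the residue ring of the division algebra $\Delta$, and then treat the three parts as concrete computations. First I would fix the identification $A=\Lambda/P^t\Lambda=\Lambda_P/\hat P^t\Lambda_P\cong \M_r(\Delta)/\hat P^t\M_r(\Delta)=\M_r(\Delta/\hat P^t\Delta)$, and then use $\hat P\Delta=\pi^e\Delta$ to rewrite this as $A\cong\M_r(\Gamma)$ with $\Gamma=\Delta/\pi^{et}\Delta$. Here $\Gamma$ is a local ring whose unique maximal ideal $\pi\Delta/\pi^{et}\Delta$ is nilpotent and has residue division ring $\overline\Delta$. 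From $J(\M_r(\Gamma))=\M_r(J(\Gamma))$ I get $J(A)=\M_r(\pi\Delta/\pi^{et}\Delta)$ and $A/J(A)\cong\M_r(\overline\Delta)$, which is simple; thus $A$ is primary. Since a maximal order is a finitely generated $R$-module, $A$ is a finitely generated module over the Artinian ring $R/P^t$ and hence is Artinian. This establishes (i).

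For (ii), the relation $\Delta\pi=\pi\Delta$ gives $(\pi\Delta)^2=\pi^2\Delta$, so (assuming $et\ge 2$, the radical being zero otherwise) $J(A)^2=\M_r(\pi^2\Delta/\pi^{et}\Delta)$ and hence $J(A)/J(A)^2\cong\M_r(W)$ with $W=\pi\Delta/\pi^2\Delta$. The crucial point is that $W$ is one-dimensional as a left $\overline\Delta$-vector space, generated by $v=\pi+\pi^2\Delta$ (indeed $\Delta\cdot v=\pi\Delta/\pi^2\Delta$ and $\overline\Delta$ is a division ring). To see that $\M_r(W)$ is a simple $\M_r(\overline\Delta)$-bimodule I would take a nonzero sub-bimodule $N$, isolate a single nonzero matrix position of one of its elements by multiplying on both sides by matrix units, use one-dimensionality of $W$ to fill that position with all of $W$, and finally translate it to every position by further matrix-unit multiplications, concluding $N=\M_r(W)$.

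For (iii), over a finite residue field $\overline\Delta$ is a finite division ring, hence the field $\mathbb F_{q^m}$, so $A/J(A)\cong\M_r(\mathbb F_{q^m})$. To compute $C$ I would work in $\M_r(W)$: commuting with all matrix units $e_{ij}$ forces an element of $C$ to be a scalar matrix $w_0I$ with $w_0\in W$, and commuting in addition with the scalars $\bar dI$ for $\bar d\in\mathbb F_{q^m}$ forces $\bar d\,w_0=w_0\,\bar d$ in $W$. Writing $w_0=\bar c\,v$, the left action is ordinary multiplication, whereas the defining relation $\pi\omega=\omega^{q^s}\pi$ yields $v\cdot\bar d=\bar d^{\,q^s}v$; the commuting condition therefore reads $\bar d\,\bar c=\bar c\,\bar d^{\,q^s}$ for every $\bar d$. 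For $\bar c\neq 0$ this means $\bar d^{\,q^s}=\bar d$ on all of $\mathbb F_{q^m}$, i.e. $m\mid s$, which under $1\le s\le m$, $(s,m)=1$ happens exactly when $m=1$. Hence $|C|=1$ when $m>1$ and $|C|=|W|=q$ when $m=1$.

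The conceptual heart, and the step needing the most care, is (iii): correctly pinning down the \emph{twisted} $\overline\Delta$-bimodule structure of $W$, in which the right action equals the left action composed with the Frobenius-type automorphism $x\mapsto x^{q^s}$ determined by $\pi\omega=\omega^{q^s}\pi$, and then reading off the centralizer from the coprimality $(s,m)=1$. Parts (i) and (ii) are routine once $A$ is written as $\M_r(\Delta/\pi^{et}\Delta)$. One should also keep track of the degenerate case $et=1$ (unramified with $t=1$), where $J(A)=0$; the centralizer formula in (iii) is to be read under the standing assumption $J(A)\neq 0$.
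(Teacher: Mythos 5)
Your proposal is correct and follows essentially the same route as the paper: pass to the $P$-adic completion, identify $A$ with $\M_r(\Delta/\pi^{et}\Delta)$, realize $J(A)/J(A)^2$ as $\M_r(\overline\Delta)$ with bimodule structure twisted by the automorphism coming from conjugation by $\pi$ (your right-twist $\bar d\mapsto\bar d^{\,q^s}$ is the inverse of the paper's left-twist $\phi=F^{-s}$, an equivalent bookkeeping), and read off simplicity and the centralizer from $(s,m)=1$. Your explicit matrix-unit verification of simplicity and your flagging of the degenerate case $et=1$ (where $J(A)=0$, which the paper's identification of $J(A)/J(A)^2$ with $\pi\Lambda_P/\pi^2\Lambda_P$ tacitly excludes) are minor elaborations of steps the paper declares clear, not a different argument.
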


\begin{proof}
We use the notation and results discussed at the beginning of this section. In particular,
we identify $\Lambda_P$ with $\M_r(\Delta)$.
We start with 
observing that two sided ideals of $\Lambda_P=\M_r(\Delta)$ are of the form $(\pi\Lambda_P)^n=\pi^n\Lambda_P=\M_r(\pi^n\Delta)$, $n\in \mathbb N$. 
Also, $\hat{P}\Lambda_P=\pi^e \Lambda_P$ and $\Lambda_P/\pi \Lambda_P\cong \M_r(\overline{\Delta})$.
It is clear now that $A\cong\Lambda_P/\hat{P}^t \Lambda_P=\Lambda_P/\pi^{et} \Lambda_P$ is an Artinian $R_P$-algebra with Jacobson radical $J(A)=\pi \Lambda_P/\pi^{et} \Lambda_P$ and
therefore $A/J(A)\cong \M_r(\overline{\Delta})$.

Note now that conjugation by $\pi$ ($x\mapsto \pi^{-1} x\pi$) defines an automorphism of 
$\Lambda_P$ (and of $\Delta$) which induces an automorphism $\phi$ of $\M_r(\overline{\Delta})$.
We may identify $J(A)/J(A)^2= \pi \Lambda_P/\pi^{2} \Lambda_P$ with $\M_r(\overline{\Delta})$
by sending the class of an element $\pi g$, $g\in \Lambda_P$ to the image of $g$ in $\M_r(\overline{\Delta})$. This yields an isomorphism of $A/J(A)$-bimodules, where
the $\M_r(\overline{\Delta})$-bimodule structure on $\M_r(\overline{\Delta})$ 
is given by $u\cdot x\cdot v=\phi(u)xv$ (where on the right hand side we use the ring
multiplication in $\M_r(\overline{\Delta})$). It is clear now that $J(A)/J(A)^2$ is simple as $A/J(A)$-bimodule. 

Finally, suppose that $R/P\cong \mathbb F_q$ is finite. Then $\overline{\Delta}\cong \mathbb F_{q^m}$
and therefore $A/J(A)\cong \M_r(\mathbb F_{q^m})$. Choosing a primitive $(q^m-1)$th root
of unity $\omega$ and $\pi$ so that $\pi\omega=\omega^{q^s}\pi$, we see that in this case the automorphism $\phi$ is induced on $\M_r(\mathbb F_{q^m})$ by the automorphism $F^{-s}$ of $\mathbb
F_{q^m}$, where $F$ is the Frobenius automorphism of $F_{q^m}$ over $F_{q}$. Under our identifications, $C$ is identified with $\{x\in  \M_r(\mathbb F_{q^m}): \phi(u)x=xu \ 
\text{for all}\ u\in  \M_r(\mathbb F_{q^m})\}$. When $m>1$, $\phi$ is a non-trivial automorphism
and it is easy to see that $C=\{0\}$ in this case. When $m=1$ then $\phi$ is the identity and
$C$ is identified with the center of $\M_r(\mathbb F_{q})$, which has $q$ elements. 
\end{proof}

\section{Maximal orders in semisimple algebras}
We start this section by recalling a few results from \cite{kmp}. For a commutative ring $R$
and an $R$-algebra $A$ which is finitely generated as an $R$-module we write $r(A,R)$
for the smallest number $k$ such that $A$ can be generated by $k$ elements as an $R$-algebra. 
Let $R$ be a Dedekind domain
with the field of fractions $K$. For a maximal ideal $\f p$ of $R$ define 
$r_{\f p}(A)=r(A/\f p A, R/\f p)$. Let $r_K(A)=r(A\otimes_R K, K)$.  
The following result is a special case of Theorem~5.7. in \cite{kmp}.

\begin{theorem}\label{localglobal}
Let $R$ be a Dedekind domain
with the field of fractions $K$ and let $A$ be an $R$-algebra which is finitely generated as an $R$-module. If $k\geq r_{\f p}(A)$ for all maximal ideals $\f p$ of $R$ and $k\geq r_K(A)+1$
then $A$ can be generated by $k$ elements as an $R$-algebra.
\end{theorem}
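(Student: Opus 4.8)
The plan is to convert the global statement into a family of statements about the residue algebras $A/\f p A$, and then to produce a single $k$-tuple that works at every maximal ideal by combining a generic choice, valid at all but finitely many primes, with a Chinese Remainder correction at the finitely many exceptional primes.

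First I would record the basic reduction. Let $a_1,\dots,a_k\in A$ and let $B=R[a_1,\dots,a_k]$ be the $R$-subalgebra they generate. Since $A$ is finitely generated as an $R$-module and $R$ is Noetherian, $A/B$ is a finitely generated $R$-module, so $B=A$ if and only if $(A/B)_{\f p}=0$ for every maximal ideal $\f p$. By Nakayama's lemma over the local ring $R_{\f p}$ this holds if and only if $(A/B)\otimes_R R/\f p=0$, that is, if and only if the images $\bar a_1,\dots,\bar a_k$ generate $A/\f p A$ as an $R/\f p$-algebra. Thus the theorem reduces to producing $a_1,\dots,a_k\in A$ whose reductions generate $A/\f p A$ for every maximal ideal $\f p$.

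Next I would exploit the generic hypothesis. Because $k-1\ge r_K(A)$, the algebra $A\otimes_R K$ is generated by $k-1$ elements, and the generating $(k-1)$-tuples form a nonempty Zariski-open subset of $(A\otimes_R K)^{k-1}$ (failure to generate is the closed condition that the span of short products drops rank); after clearing denominators coordinatewise, which does not change the generated $K$-algebra, I may take such a tuple $a_1,\dots,a_{k-1}$ inside $A$. Let $B_0=R[a_1,\dots,a_{k-1}]$. Then $A/B_0$ is a finitely generated torsion $R$-module, hence supported on a finite set $S$ of maximal ideals, and for every $\f q\notin S$ the reductions $\bar a_1,\dots,\bar a_{k-1}$ already generate $A/\f q A$. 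The key structural observation is that, keeping $a_1,\dots,a_{k-1}$ fixed, the choice of a last coordinate $a_k$ cannot destroy generation outside $S$; so by the Chinese Remainder Theorem it suffices to choose one element $a_k\in A$ whose reductions at the finitely many primes of $S$ complete $\bar a_1,\dots,\bar a_{k-1}$ to a generating $k$-tuple of $A/\f p A$. The strict inequality $k\ge r_K(A)+1$ is exactly what frees one coordinate for this correction.

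The main obstacle is therefore a completion problem at the primes of $S$: there $\bar a_1,\dots,\bar a_{k-1}$ fail to generate $A/\f p A$, and a single new generator need not suffice for an arbitrary $(k-1)$-tuple (for instance a tuple reducing to $0$ modulo $\f p$ is never completable once $r_{\f p}(A)\ge 2$). To handle this I would not keep an arbitrary generic tuple, but arrange that $a_1,\dots,a_{k-1}$ lie in general position, so that at every maximal ideal their reductions extend to a full generating set. Here the machinery of Sections 2 and 3 enters: by Corollary~\ref{lifts} the completion problem over $A/\f p A$ descends to the same problem over $A/J(A/\f p A)^2$, and via Theorem~\ref{bisimple} and Theorem~\ref{primary} to the semisimple quotient, which is a product of matrix algebras over finite residue fields. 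There the hypothesis $r_{\f p}(A)\le k$ supplies exactly one generator of slack, and the counting results (Theorem~\ref{gaschutz} together with the estimates for generating tuples of matrix algebras) show that the non-completable reductions form a proper subvariety at each prime. A Chinese Remainder/strong-approximation argument over the Dedekind domain $R$, using the positivity of these counts, then produces a single generic tuple that is simultaneously completable at every prime; fixing $a_k$ by the Chinese Remainder Theorem on $S$ finishes the proof. I expect this general-position/completability step to be the crux, the preliminary reduction and the generic choice being routine.
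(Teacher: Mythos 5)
The paper does not actually prove Theorem~\ref{localglobal}; it imports it verbatim as a special case of Theorem~5.7 of \cite{kmp}. So your proposal must stand on its own, and it does not: the gap is exactly at the step you yourself call the crux. Your first two reductions are fine (Nakayama reduces generation to generation of every $A/\f p A$; a $(k-1)$-tuple generating $A\otimes_R K$, after clearing denominators, fails only on the finite support $S$ of $A/B_0$, and the last slot can be prescribed on $S$ by CRT). But the completability step is circular as sketched. Completability of the reduction of $(a_1,\dots,a_{k-1})$ at $\f p$ is needed precisely on the bad set of that tuple, and the bad set \emph{moves with the tuple}: if you perturb the tuple by CRT so that its reductions at the current bad primes become truncations of generating $k$-tuples, the perturbed tuple has a new, a priori different, finite bad set, at whose new primes the reductions are again uncontrolled (e.g.\ a tuple reducing to $0$ at a prime with $r_{\f p}(A)\geq 2$, as you note, is not completable). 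CRT/strong approximation only controls finitely many \emph{prespecified} primes, so ``a single generic tuple simultaneously completable at every prime'' does not follow from positivity of counts prime by prime; nothing in your outline closes this loop, and closing it is the actual content of Theorem~5.7 of \cite{kmp} (an algebra analogue of Forster--Swan-type basic-element arguments, not a genericity-plus-CRT argument).

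A second, independent problem is that the machinery you invoke is not available in the stated generality. Theorem~\ref{localglobal} concerns an arbitrary Dedekind domain, whose residue fields may be infinite (indeed Theorem~\ref{3gen} treats exactly that case), whereas Theorem~\ref{gaschutz} requires $A$ of finite cardinality and the counting results of Section~3 are about matrix algebras over finite fields; ``non-completable reductions form a proper subvariety of positive codimension at each prime'' has no counting meaning there, and even over number rings, positivity at each prime does not give positivity of the infinite product without quantitative estimates --- that is precisely why Theorem~\ref{smallest1} needs Lemma~\ref{estimat} and why Theorem~\ref{density} (which moreover assumes $A$ free over a number ring) cannot be cited here. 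Finally, your appeal to Theorem~\ref{bisimple} presumes $J/J^2$ is simple as a bimodule over the semisimple quotient; that holds for the quotients of maximal orders by Theorem~\ref{simple}(ii), but not for a general $R$-algebra finitely generated as an $R$-module, which is the setting of Theorem~\ref{localglobal}. So the skeleton (Nakayama reduction, generic tuple, conductor finiteness, CRT at the exceptional set) is the standard and correct start, but the proof is genuinely incomplete at its central step.
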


Suppose now that $R$ is the ring of algebraic integers in a number field $K$. Then,
for each maximal ideal $\f p$ of $R$, the residue field $R/\f p$ is finite with $\N(\f p)$
elements. For a positive integer $k$, let $\g_k(A,\f p)=\g_k(A/\f p A, R/\f p)$. 
Suppose that $A$ is a free $R$-module of finite rank. In \cite{kmp} we introduced the notion
of density $\den_k(A)$ of the set of all $k$-tuples in $A^k$ which generate $A$ as and $R$-algebra.
Roughly speaking, a choice of an integral basis of $R$ and of a basis of $A$ over $R$ allows us to introduce integral coordinates on all cartesian
powers $A^k$, $k\in \mathbb N$. For any
subset $S$ of $A^k$ and any $N$ we consider the finite set $S(N)$ of all points whose coordinates are in the interval $[-N,N]$. We define the density $\den(S)$ of $S$ as the limit
$\displaystyle \lim_{N\to\infty} \frac{|S(N)|}{|A^k(N)|}$ (we do not claim that it always exists).

\begin{theorem}\label{density}
Let $R$ be the ring of algebraic integers in a number field $K$ and let $A$ be an $R$-algebra
which is free of rank $m$ as an $R$-module. Then
\begin{equation}\label{dens1}
 \den_k(A)=\prod_{\f p\in \mspec R}\frac{\g_k(A,\f p)}{\N(\f p)^{mk}}
\end{equation}
for any positive integer $k$, where $\mspec R$ is the set of all maximal ideals of $R$.
\end{theorem}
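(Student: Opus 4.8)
The plan is to realize the set $S\subseteq A^k$ of generating $k$-tuples as an intersection of local conditions and then to compute its density by a sieve, the Euler product on the right being the ``independent'' combination of the local densities. The first step is a local--global principle for generation: a $k$-tuple $(a_1,\dots,a_k)$ generates the $R$-algebra $A$ if and only if, for every maximal ideal $\f p$ of $R$, its reduction generates the $R/\f p$-algebra $A/\f p A$. Indeed, if $B$ is the $R$-subalgebra generated by the $a_i$, then $B$ is a finitely generated $R$-module (a submodule of the Noetherian module $A$), hence $A/B$ is finitely generated; by Nakayama's lemma $A/B=0$ if and only if $(A/B)/\f p(A/B)=A/(B+\f p A)=0$ for all $\f p$, and the latter says precisely that the images of the $a_i$ generate $A/\f p A$. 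Writing $S_{\f p}$ for the set of tuples in $A^k$ whose reduction generates $A/\f p A$, we thus have $S=\bigcap_{\f p}S_{\f p}$.

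Next I would record the density of each local condition together with their joint behaviour over finite sets of primes. A choice of integral basis of $R$ and of an $R$-basis of $A$ identifies $A^k$ with a lattice in which the boxes $A^k(N)$ are defined. Since $A/\f p A\cong(R/\f p)^m$ has $\N(\f p)^m$ elements, the set $S_{\f p}$ is a union of $\g_k(A,\f p)$ of the $\N(\f p)^{mk}$ residue classes modulo $\f p$; as $\f p$ contains the rational prime $p$ below it, $S_{\f p}$ is in fact $p$-periodic in coordinates, and an elementary count of lattice points in boxes gives $\den(S_{\f p})=\g_k(A,\f p)/\N(\f p)^{mk}$. For a finite set $T$ of maximal ideals the conditions at distinct primes are independent by the Chinese Remainder Theorem, so $S_T:=\bigcap_{\f p\in T}S_{\f p}$ has density $\prod_{\f p\in T}\den(S_{\f p})$. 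As $S\subseteq S_T$ for every finite $T$, this already yields $\overline{\den}(S)\le P$, where $P:=\prod_{\f p}\den(S_{\f p})$ (the partial products decrease to $P$ since all factors lie in $[0,1]$). In particular, if $P=0$ the theorem follows at once, with $\den(S)=0$.

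It remains to prove the matching lower bound when $P>0$. Positivity of $P$ forces $\sum_{\f p}\bigl(1-\den(S_{\f p})\bigr)<\infty$; moreover, since $\den(S_{\f p})$ is a fixed rational expression in $\N(\f p)$ depending only on the (for all but finitely many $\f p$, finitely many) reduction types of $A$, this convergence forces the quantitative decay $1-\den(S_{\f p})=O(\N(\f p)^{-2})$ for almost all $\f p$. Now a tuple lying in $S_T$ but not in $S$ must generate $A\otimes_R K$: a tuple failing over $K$ makes $A/B$ of positive rank, so $A/(B+\f p A)\ne 0$ and the tuple fails at every finite prime, in particular inside $T$. Such a $K$-generating tuple fails only at the finitely many primes dividing the index $[A:B]$, whose norm is $\le N^{O(1)}$ on the box $A^k(N)$. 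Estimating the box-count of each $S_{\f p}$-failure as $(1-\den(S_{\f p}))\,|A^k(N)|$ with boundary error $O\bigl(\N(\f p)\,|A^k(N)|/N\bigr)$, summing over $\f p\notin T$ with $\N(\f p)\le N^{O(1)}$, and invoking the $O(\N(\f p)^{-2})$ decay to bound both the tail $\sum_{\f p\notin T}(1-\den(S_{\f p}))<\varepsilon$ and the accumulated boundary error (which becomes $O(\log N/N)$), gives $\underline{\den}(S)\ge \prod_{\f p\in T}\den(S_{\f p})-\varepsilon$. Letting $T$ exhaust $\mspec R$ yields $\den(S)=P$.

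The main obstacle is exactly this last step: passing from the exact finite-product identity to the infinite Euler product, that is, showing that tuples which generate modulo all small primes yet fail at some large prime are negligible. Conceptually this is an equidistribution statement, namely that the box-counting density of the infinite intersection $\bigcap_{\f p}S_{\f p}$ equals the Haar measure $\prod_{\f p}\den(S_{\f p})$ of the corresponding cylinder set in $\prod_{\f p}(A/\f p A)^k$; the interchange of the density limit with the infinite product is legitimate precisely because of the quadratic decay $1-\den(S_{\f p})=O(\N(\f p)^{-2})$, which is itself available only in the regime $P>0$. Controlling the boundary (non-full-period) contributions of the primes of intermediate size uniformly in $N$ is the delicate point; everything else reduces to the Chinese Remainder Theorem and elementary lattice-point counting.
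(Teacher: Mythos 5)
Your scaffolding is the correct one, and its easy parts are sound: the Nakayama local--global equivalence ($a_1,\ldots,a_k$ generate $A$ iff they generate every $A/\f p A$), the periodicity of $S_{\f p}$ modulo the rational prime below $\f p$ giving $\den(S_{\f p})=\g_k(A,\f p)/\N(\f p)^{mk}$, independence over finite sets $T$ by CRT, the upper bound $\overline{\den}(S)\le P$ from finite partial products (which disposes of the case $P=0$), and the observation that a tuple in $S_T\setminus S$ must generate $A\otimes_R K$ and hence fails only at the $O(\log N)$ primes dividing an index of size $N^{O(1)}$. Note that the present paper does not prove Theorem~\ref{density} at all: it is quoted from \cite{kmp}, whose proof follows exactly this outline up to the point where yours breaks. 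The genuine gap is the range $c N<\N(\f p)\le N^{O(1)}$, which you yourself flag as ``the delicate point'' but do not close. There the bookkeeping ``main term $(1-\den(S_{\f p}))\,|A^k(N)|$ plus boundary error $O(\N(\f p)\,|A^k(N)|/N)$'' is vacuous: once $\N(\f p)$ exceeds the side of the box, each residue class meets $A^k(N)$ in at most one point, so the numerical decay $1-\den(S_{\f p})=O(\N(\f p)^{-2})$ only bounds the failing points at a single such $\f p$ by $O(\N(\f p)^{\,n-2})$ (with $n=[K:\mathbb Q]mk$ coordinates), and summing either this or your boundary terms over all $\f p$ with $\N(\f p)\le N^{O(1)}$ yields a quantity far exceeding $|A^k(N)|$ --- certainly not $O(\log N/N)$. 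No periodicity-plus-counting argument can work in this range, because the input $|Z(\mathbb F_p)|=O(p^{n-2})$ says nothing about boxes of side smaller than $p$. What is needed is structural: one must show that, outside finitely many primes, failure at $\f p$ forces $\f p$ to divide the values of two fixed polynomials that are coprime over $K$ (equivalently, when $P>0$ the non-generating locus has codimension at least two), and then invoke the Ekedahl-type sieve of Poonen \cite{poonen} --- this is why that paper sits in the bibliography --- which bounds the number of lattice points in the box at which both values are divisible by some prime $>M$ by $O(|A^k(N)|/(M\log M))+O(N^{-1}|A^k(N)|)$, uniformly in $N$. This coprimality/codimension lemma plus the sieve is the heart of the proof in \cite{kmp}, and it is entirely absent from your argument.

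A secondary soft spot: your inference that $P>0$ ``forces'' $1-\den(S_{\f p})=O(\N(\f p)^{-2})$ for almost all $\f p$ does not follow from convergence alone, since an infinite but sparse set of primes with decay only $\N(\f p)^{-1}$ is a priori compatible with a convergent product. What rules this out is again structural: for $\f p$ outside a finite set, the isomorphism type of $A/\f p A$ is governed by splitting data in a fixed finite extension, so by Chebotarev each reduction type occurs for a set of primes that is either finite or of positive Dirichlet density, and in the latter case a type with linear decay kills the product. Even granting this decay, however, it cannot substitute for the codimension-two geometry in the large-prime range, so the proposal as written establishes only $\den(S)=P$ when $P=0$ and the inequality $\overline{\den}(S)\le P$ in general; the matching lower bound, which is the substance of the theorem, remains unproved.
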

Note in particular that if $\den_k(A)>0$ then $A$ can be generated by $k$ elements as an $R$-algebra.
The right hand side of (\ref{dens1}) makes sense when $A$ is a projective $R$-module. But so far 
we have no good definition of density in this case. Nevertheless, we propose the following conjecture.

\begin{conjecture}\label{conjdense}
If $A$ in Theorem~\ref{density} is finitely generated and projective as an $R$-module and
if for some $k$ the right hand side of (\ref{dens1}) is positive, then $A$ can be generated by
$k$ elements as an $R$-algebra.  
\end{conjecture}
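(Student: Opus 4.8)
The plan is to extend the density computation of Theorem~\ref{density} from free to projective modules, since that theorem already yields the conclusion in the free case and the only new feature is the lack of a canonical coordinate system on $A$. The key reduction, which explains why no separate generic hypothesis on $A\otimes_R K$ is needed, is the following: a $k$-tuple $\underline a=(a_1,\dots,a_k)\in A^k$ generates $A$ as an $R$-algebra if and only if for every maximal ideal $\f p$ of $R$ its reduction generates $A/\f p A$ as an $R/\f p$-algebra. Indeed, writing $B=R[\underline a]$ for the generated subalgebra, the inclusion $B\subseteq A$ is an equality if and only if it is an equality after localizing at each $\f p$; since $A_{\f p}$ and $(A/B)_{\f p}$ are finitely generated over the local ring $R_{\f p}$, Nakayama's lemma shows this holds precisely when $B$ surjects onto $A/\f p A$. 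Thus the set $S\subseteq A^k$ of generating tuples is cut out purely by congruence conditions, one per prime, and by the Chinese Remainder Theorem these are independent across distinct primes. First I would record this, so that $S$ becomes a congruence set to which a sieve can be applied.

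Next I would set up a box-counting density on $A^k$. As $R$ is a Dedekind domain, the structure theorem for projective modules furnishes a free submodule $F\cong R^m$ with $A/F$ finite. Fixing an $R$-basis of $F$ together with an integral basis of $R$ over $\mathbb Z$ realizes $A^k$ as a full lattice in $(A\otimes_R K)^k$ and supplies integral coordinates and boxes $A^k(N)$ exactly as in \cite{kmp}. Because $A$ is projective of rank $m$, we have $|A/\f p^{\,j}A|=\N(\f p)^{jm}$, so the local density at $\f p$ of tuples whose reduction generates $A/\f p A$ is $\g_k(A,\f p)/\N(\f p)^{mk}$, with the approximation error from truncating at $\f p^{\,j}$ decaying in $j$. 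At the finitely many primes dividing $[A:F]$ the $F$-coordinates do not align with $A_{\f p}$, but $A$ is a finite union of cosets of $F$, so only finitely many local factors are affected and convergence is unharmed.

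With this in place I would run the inclusion--exclusion sieve of \cite{kmp} over squarefree products of primes, writing $|S(N)|/|A^k(N)|$ as a partial product of the local factors plus an error term, and bounding the tail of the product (hence the error) by the uniform lower bound of Theorem~\ref{exactformula}(iv). Since the right-hand side of (\ref{dens1}) is assumed positive, the full product converges to a positive limit, so $\den_k(A)=\prod_{\f p\in\mspec R}\g_k(A,\f p)/\N(\f p)^{mk}>0$; a set of positive density is nonempty, so $S\neq\varnothing$ and $A$ is generated by $k$ elements. It is essential that the argument go through the density and not merely through Theorem~\ref{localglobal}: the latter requires $k\geq r_K(A)+1$, which \emph{fails} exactly when $A\otimes_R K$ needs $k$ generators while its non-generating locus has codimension $\geq 2$ --- as happens already for $\M_n$ with $n\geq 3$ and $k=2$, where Theorem~\ref{exactformula}(iv) gives non-generating density $O(\N(\f p)^{-2})$ and the product stays positive. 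These are precisely the cases the density method must capture and Theorem~\ref{localglobal} cannot.

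The hard part, and the gap flagged in the paper by the remark that ``we have no good definition of density in this case,'' is the second paragraph: proving that the box-counting limit exists, that it is independent of the auxiliary choices of $F$ and of the bases, and that it genuinely equals the intrinsic product $\prod_{\f p}\g_k(A,\f p)/\N(\f p)^{mk}$. For a free module the coordinates are canonical and the lattice-point estimates feeding the sieve are uniform; for a merely projective $A$ the boxes depend on the embedding $A\hookrightarrow A\otimes_R K$ determined by $F$, and one must show both well-definedness and that the sieve's error terms remain uniformly controlled across the infinitely many possible local behaviours --- including those at primes dividing $[A:F]$ and those with $n\geq 4$, where $\g_k(n,q)$ is unknown and one can rely only on the estimate of Theorem~\ref{exactformula}(iv). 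Establishing this uniformity, so that the $F$-coordinate density may replace the missing canonical one without altering the value of the product, is the crux on which a complete proof depends.
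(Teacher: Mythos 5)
The statement you are trying to prove is stated in the paper as a \emph{conjecture}: the authors explicitly write, just before it, that the right-hand side of (\ref{dens1}) makes sense for projective $A$ ``but so far we have no good definition of density in this case,'' and their Theorem~\ref{smallest1} is stated conditionally on the conjecture. So there is no paper proof to match your argument against, and your proposal does not supply one either: your own final paragraph concedes that the decisive step --- constructing a box-counting density for a merely projective $A$, proving the limit exists and is independent of the choice of the finite-index free submodule $F\cong R^m$ and its basis, and showing it equals the intrinsic product $\prod_{\f p}\g_k(A,\f p)/\N(\f p)^{mk}$ with uniformly controlled sieve errors --- remains unestablished. That step is not a technical loose end; it is exactly the open problem that makes the statement a conjecture rather than a corollary of Theorem~\ref{density}. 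What you have written before that point is a correct framing (the Nakayama local-global criterion for generation is sound, as is the observation that positive density would force nonemptiness), but the framing is already implicit in the free case treated in \cite{kmp}; the new content needed for projective $A$ is precisely what is deferred.

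Two places where the deferral is glossed over as if it were routine deserve flagging. First, ``only finitely many local factors are affected and convergence is unharmed'' conflates convergence of the \emph{candidate} infinite product (which is indeed a finite-primes issue) with the actual analytic problem: the sieve must count points of $A^k$, not $F^k$, in boxes adapted to $F$, and the congruence conditions at a prime $\f p$ live on $A/\f p^{j}A$, which for $\f p\mid [A:F]$ is not read off from $F$-coordinates; carrying the coset decomposition of $A$ over $F$ through the inclusion--exclusion with errors uniform in the modulus is the unproved core, not a harmless adjustment. Second, ``bounding the tail of the product (hence the error) by Theorem~\ref{exactformula}(iv)'' silently assumes that the truncation error of the sieve is dominated by the tail of the local densities; in the free case of \cite{kmp} this is a theorem requiring genuine work, and no argument is given that it transfers when the lattice coordinates and the algebra structure are misaligned at finitely many primes. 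Until these are supplied, your text is a reasonable research plan restating the conjecture's difficulty, not a proof.
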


Finally, we state the following result, which is Proposition~2.12. in \cite{kmp}.

\begin{proposition}\label{productgen}
Suppose that $A=\prod_{i=1}^s A_i$ is a product of $R$-algebras $A_1,\ldots,A_s$ such that
for any maximal ideal $\f m$ of $R$ and any $i\neq j$ the $R/\f m$-algebras
$A_i\otimes_{R} R/\f m$ and $A_j\otimes_{R} R/\f m$ do not have isomorphic quotients.
Then elements 
\[a_1=(a_{11}, \ldots,  a_{1s}), \ldots  ,a_k=(a_{k1}, \ldots,  a_{ks})\]
of $A$ generate $A$ as an $R$-algebra if and only if 
the elements $a_{1i},
\ldots, a_{ki}$ generate $A_i$ as an $R$-algebra for $i = 1, \ldots, s$. 
\end{proposition}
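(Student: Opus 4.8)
The plan is to prove the nontrivial ``if'' direction by an induction on $s$ that reduces the problem to the case of two factors, where a Goursat-type argument converts a proper generating subalgebra into a common quotient that the hypothesis forbids. Throughout I use that each $A_i$ is finitely generated as an $R$-module, so that any nonzero quotient $C$ of some $A_i$ is a nonzero finitely generated $R$-module and hence, by Nakayama's lemma, satisfies $C\otimes_R R/\f m\neq 0$ for some maximal ideal $\f m$.

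The ``only if'' direction is immediate: each projection $\pi_i\colon A\to A_i$ is a surjective homomorphism of $R$-algebras, so it carries a generating tuple of $A$ to a generating tuple of $A_i$. For the converse, let $B$ be the $R$-subalgebra of $A$ generated by $a_1,\dots,a_k$. The hypothesis that $a_{1i},\dots,a_{ki}$ generate $A_i$ says exactly that $\pi_i(B)=A_i$ for every $i$, and the desired conclusion is $B=A$. Thus it suffices to prove, by induction on $s$, the following: if $B\subseteq\prod_{i=1}^s A_i$ is a subalgebra with $\pi_i(B)=A_i$ for all $i$, then $B=A$. The base case $s=1$ is trivial.

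For the inductive step, write $A^{*}=\prod_{i=1}^{s-1}A_i$ and let $B^{*}$ be the image of $B$ under the projection $A\to A^{*}$. Since $\pi_i(B^{*})=\pi_i(B)=A_i$ for $i<s$ and the subfamily $A_1,\dots,A_{s-1}$ inherits the pairwise hypothesis, the induction hypothesis gives $B^{*}=A^{*}$; that is, $B$ surjects onto $A^{*}$ as well as onto $A_s$. It then remains to treat the two-factor situation $B\subseteq A^{*}\times A_s$ with both projections surjective. Here I would run the standard Goursat argument: the sets $I^{*}=\{x\in A^{*}:(x,0)\in B\}$ and $I_s=\{y\in A_s:(0,y)\in B\}$ are two-sided ideals (using surjectivity of the projections), the assignment $x+I^{*}\mapsto y+I_s$ for $(x,y)\in B$ is a well-defined isomorphism $A^{*}/I^{*}\cong A_s/I_s$, and one checks that $B=A^{*}\times A_s$ as soon as $I^{*}=A^{*}$. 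Hence, if $B\neq A$, the common quotient $C:=A^{*}/I^{*}\cong A_s/I_s$ is nonzero.

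Finally I would derive the contradiction. Decomposing $I^{*}$ along the central idempotents of $A^{*}=\prod_{i<s}A_i$ writes $C\cong\prod_{i<s}C_i$ with each $C_i$ a quotient of $A_i$; since $C\neq 0$, some $C_{i_0}\neq 0$ with $i_0<s$. The projection $C\to C_{i_0}$ exhibits $C_{i_0}$ as a quotient of $C\cong A_s/I_s$, so $C_{i_0}$ is simultaneously a quotient of $A_{i_0}$ and of $A_s$. Choosing a maximal ideal $\f m$ with $C_{i_0}\otimes_R R/\f m\neq 0$ and applying the right-exact functor $-\otimes_R R/\f m$ to the two surjections, I obtain a nonzero $R/\f m$-algebra that is a common quotient of $A_{i_0}\otimes_R R/\f m$ and $A_s\otimes_R R/\f m$ with $i_0\neq s$, contradicting the hypothesis; therefore $B=A$. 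I expect the main obstacle to be the bookkeeping in this last passage—converting the single global common quotient produced by Goursat into a coincidence between two \emph{individual} factors mod $\f m$—which is precisely the point where the finite-generation hypothesis (through Nakayama) and the idempotent decomposition of a quotient of a product are both essential.
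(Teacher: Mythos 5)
Your proof is correct, but it takes a genuinely different route from the one in the paper's framework. The paper itself imports this proposition from \cite{kmp} and, for statements of this type, argues module-theoretically: given a subalgebra $B$ with surjective projections, one pulls back the simple modules of the factors along the projections $\pi_i\colon B\to A_i$, shows these $B$-modules are pairwise non-isomorphic (this is exactly where the ``no isomorphic quotients mod $\f m$'' hypothesis enters, since an isomorphism of pulled-back modules manufactures an isomorphism between quotients of two factors), and then concludes $B=A$ via Wedderburn--Artin and projective covers, with Lemma~\ref{subalg} (an idempotent-plus-nilpotency argument) handling the passage through the radical. You instead induct on $s$, reduce to two factors, and run Goursat: a proper $B$ yields a nonzero common quotient $C\cong A^{*}/I^{*}\cong A_s/I_s$, which you split along the central idempotents of the product $A^{*}$ to pin the coincidence on a single pair $(A_{i_0},A_s)$, and then push down to a residue field via Nakayama. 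Both steps are sound, and you correctly identify the two implicit hypotheses that make the last step work: the $A_i$ must be finitely generated as $R$-modules (a standing assumption in \cite{kmp} that the statement as quoted here suppresses) so that a nonzero quotient survives tensoring with some $R/\f m$, and ``isomorphic quotients'' must be read as nonzero quotients. The trade-off is instructive: your Goursat argument is elementary and self-contained, needs no Artinian or semisimplicity reduction, and proves the stronger subalgebra statement (any subalgebra with surjective projections is everything) in one stroke; the paper's module-theoretic machinery is heavier but is built to handle the harder situation the proposition cannot touch---repeated factors, where the no-common-quotient hypothesis fails and one needs the automorphism-twisted condition of Theorem~\ref{primary}---and it meshes with the counting apparatus ($\g_k$, $g_k$) used in the rest of the paper, which a purely qualitative Goursat contradiction does not provide.
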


Let now $R$ be a Dedekind domain with the field of fractions $K$. Let $H$ be a separable
$K$-algebra and let $\Lambda$ be a maximal $R$-order in $H$. Our goal is to apply the above results in order to find  the smallest number of generators of $\Lambda$ as an $R$-algebra. To this end we
introduce the following notation.

{\sffamily{
\begin{itemize}
\item $R$ is a Dedekind domain with the field of fractions $K$.

\item $P$ is a maximal ideal of $R$.

\item $H$ is a separable $K$-algebra and $\Lambda$ is a maximal $R$-order in $H$.

\item $H=\prod_{i} H_i$, where $H_i$ are simple separable $K$-algebras.

\item $K_i$ is the center of $H_i$ and $R_i$ is the integral closure of $R$ in $K_i$.

\item $\Lambda= \prod_{i} \Lambda_i$, where $\Lambda_i$ is a maximal $R_i$-order in $H_i$.

\item $PR_i=\prod_{j}P_{i,j}^{e_{i,j}}$, where $P_{i,j}$ are maximal ideals of $R_i$ and $e_{i,j}$ are positive integers.

\item $A_{i,j}=\Lambda_{i}/P_{i,j}^{e_{i,j}}\Lambda_i$.
\end{itemize}}}
It follows that
\begin{equation}\label{modP}
 \Lambda/P\Lambda= \prod_{i} \Lambda_i/P\Lambda_i= \prod_{i,j}\Lambda_{i}/P_{i,j}^{e_{i,j}}\Lambda_i
 =\prod_{i,j}A_{i,j}.
\end{equation}


We now state our first result of this section.

\begin{theorem}\label{3gen}
Let $R$ be a Dedekind domain whose residue fields are infinite. Let $K$ be the field of fractions
of $R$ and let $H$ be a separable $K$-algebra. Let $\Lambda$ be a maximal $R$-order in $H$.
Suppose that for each maximal ideal $P$ of $R$ the quotient of $\Lambda/P\Lambda$ by its
Jacobson radical is a separable $R/P$-algebra (note that this is true when all residue fields
of $R$ are perfect). Then $\Lambda$ can be generated by 3 elements as an $R$-algebra.
\end{theorem}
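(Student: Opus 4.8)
The plan is to apply the local--global principle of Theorem~\ref{localglobal}. Since $\Lambda$ is finitely generated as an $R$-module and $R$ is Dedekind, it suffices to establish the two bounds $r_K(\Lambda)\le 2$ and $r_P(\Lambda)\le 3$ for every maximal ideal $P$; taking $k=3$ in Theorem~\ref{localglobal} (so that $3\ge r_K(\Lambda)+1$ and $3\ge r_P(\Lambda)$) then gives the conclusion. Here $r_K(\Lambda)=r(H,K)$ since $\Lambda\otimes_R K=H$, and $r_P(\Lambda)=r(\Lambda/P\Lambda,\,R/P)$.

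The technical heart is a single lemma: a finite dimensional separable algebra $S$ over an infinite field $F$ is generated by two elements. I would prove this by a genericity argument. The set $G$ of pairs $(u,v)\in S^2$ generating $S$ is the locus where the monomials in $u,v$ of degree at most $\dim_F S$ span $S$; expressing this as the non-vanishing of a maximal minor of the coordinate matrix of these monomials (a polynomial condition in the structure constants of $S$ over $F$) shows that $G$ is Zariski-open in the affine space $\mathbb{A}^{2\dim_F S}_F$. Because $F$ is infinite its rational points are Zariski-dense, so $G$ has an $F$-point as soon as $G(\bar F)\neq\emptyset$, i.e. as soon as $S\otimes_F\bar F$ is two-generated over $\bar F$. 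By separability $S\otimes_F\bar F\cong\prod_l\M_{n_l}(\bar F)$, and this product is two-generated: each factor is generated by a generic pair of matrices, and since $\bar F$ is infinite there are infinitely many pairwise non-conjugate generating pairs, so condition (2) of Theorem~\ref{primary} with $J=0$ can be met among the finitely many factors. Applying the lemma to $H$, which is separable over the infinite field $K$, yields $r_K(\Lambda)=r(H,K)\le 2$.

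For the bound $r_P(\Lambda)\le 3$, set $A=\Lambda/P\Lambda$ and $F=R/P$. By Corollary~\ref{lifts} it is enough to three-generate $A/J(A)^2$, since its generators lift to generators of $A$. Writing $A=\prod_{i,j}A_{i,j}$ as in (\ref{modP}), Theorem~\ref{simple} shows that each $A_{i,j}$ is Artinian primary with simple quotient $S_{i,j}=A_{i,j}/J(A_{i,j})\cong\M_{r_{i,j}}(\overline{\Delta_{i,j}})$ and with $J(A_{i,j})/J(A_{i,j})^2$ simple as an $S_{i,j}$-bimodule, while the separability hypothesis guarantees each $S_{i,j}$ is separable over $F$. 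I would group the factors $B_{i,j}:=A_{i,j}/J(A_{i,j})^2$ by the isomorphism class of $S_{i,j}$: factors in different groups share no simple quotient, so Proposition~\ref{productgen} reduces the problem to three-generating each group separately. Within a group all the $S_{i,j}$ are isomorphic, so Theorem~\ref{primary} applies and three-generation follows from (1) each $B_{i,j}$ being three-generated and (2) a distinctness condition on the images in the $S_{i,j}$. Condition (1) holds because the hypotheses of Theorem~\ref{bisimple} ($J^2=0$, $J$ simple as a bimodule, $S_{i,j}$ separable) are all met, so any $k\ge 2$ generators of $S_{i,j}$ lift to generators of $B_{i,j}$, and $S_{i,j}$ is two-generated by the lemma. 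Condition (2) holds because over the infinite field $F$ the generating triples of a fixed simple algebra $S$ form a variety of dimension $3\dim_F S$, on which $\Aut_F(S)$ acts with orbits of strictly smaller dimension, so there are infinitely many orbits and one can choose pairwise non-isomorphic triples for the finitely many factors; lifting preserves these images, so the assembled global triple generates.

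The step I expect to be the main obstacle is the lemma on two-generation of separable algebras over infinite fields, precisely because of the noncommutative centers and division-algebra parts of the $S_{i,j}$; the reduction to $\bar F$ via the openness of the generating locus is what makes it tractable, turning the question into the concrete statement that a finite product of matrix algebras over an algebraically closed field is two-generated. Everything else is an assembly of the structural results of Sections~\ref{primary}--\ref{simple}, with the only arithmetic input being the infinitude of the residue fields, used both to furnish rational points of the generating locus and to satisfy the distinctness conditions of Theorem~\ref{primary}.
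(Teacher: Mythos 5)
Your proposal is correct, and its global architecture coincides with the paper's: both run Theorem~\ref{localglobal} with $k=3$, use the decomposition (\ref{modP}) into primary factors $A_{i,j}$ together with Theorem~\ref{simple}, group the factors by isomorphism class of the simple quotient and split via Proposition~\ref{productgen}, and lift generators through the chain Theorem~\ref{bisimple} plus Corollary~\ref{lifts}. The genuine difference lies in the key lemma. The paper simply cites \cite{mmbp} for the fact that separable algebras over infinite fields are $2$-generated, and applies it not only to $H$ but also to the whole semisimple block quotient $A_B/J(A_B)\cong B^{M}$; since that product is itself a separable algebra over the infinite field $R/P$, condition (2) of Theorem~\ref{primary} comes for free, and the paper obtains the sharper local bound $r_P(\Lambda)\le 2$, the constraint $k\ge r_K(\Lambda)+1=3$ being the only source of the third generator. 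You instead re-prove the lemma from scratch by a genericity argument (the generating locus is cut out by non-vanishing of maximal minors of the monomial coordinate matrix, hence is Zariski-open and defined over $F$; infinitude of $F$ supplies rational points once the locus is nonempty over $\bar F$, where separability reduces everything to $2$-generation of a product of matrix algebras), and you then settle condition (2) by hand via an orbit-dimension count, arriving at $r_P(\Lambda)\le 3$, which still suffices for Theorem~\ref{localglobal}. Two remarks: your orbit count is sound (orbits of $\Aut_F(S)$ have dimension at most $\dim_F S - \dim_F Z(S)$ up to a finite outer part, so finitely many orbits cannot exhaust the Zariski-dense set of $F$-rational generating triples), but it is actually redundant, since $S^{M}$ is separable whenever $S$ is, so your own lemma applies directly to $A_B/J(A_B)$ and hands you generating pairs of the whole block quotient --- precisely the shortcut the paper takes through the citation; and in the $\bar F$ step of your lemma, condition (2) of Theorem~\ref{primary} need only be checked among matrix factors of equal size, after a Proposition~\ref{productgen}-style splitting across the distinct sizes, a point worth stating explicitly even though it is standard. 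What your route buys is self-containedness (no appeal to \cite{mmbp}); what the paper's buys is brevity and the stronger local statement that every $\Lambda/P\Lambda$ is already $2$-generated.
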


\begin{proof}
We proved in \cite{mmbp} that separable algebras over infinite fields are $2$-generated.
Thus, $r_K(\Lambda)\leq 2$. By Theorem~\ref{localglobal}, it suffices now to show
that for every maximal ideal $P$ of $R$ the $R/P$-algebra $\Lambda/P\Lambda$ is $2$-generated.
To this end, let $P$ be a maximal ideal of $R$. We use the notation introduced above. 

By Theorem~\ref{simple}, each $A_{i,j}$ in (\ref{modP}) is an 
Artinian primary $R_i/P_{i,j}^{e_{i,j}}$-algebra, finitely generated as $R_i/P_{i,j}^{e_{i,j}}$-module. Note that $R_i/P_{i,j}^{e_{i,j}}$ is a finite dimensional
$R/P$-algebra. It follows that $A_{i,j}$ is a finite dimensional primary $R/P$-algebra.
Furthermore, $A_{i,j}/J(A_{i,j})$ is a separable $R/P$-algebra, hence it is $2$-generated.
By Theorem~\ref{simple}, $J(A_{i,j})/ J(A_{i,j})^2$ is a simple  $A_{i,j}/ J(A_{i,j})$-bimodule.
Thus any pair of $R/P$-algebra generators of $A_{i,j}/ J(A_{i,j})$ can be lifted to a pair
of $R/P$-algebra generators of $A_{i,j}/ J(A_{i,j})^2$ by Theorem~\ref{bisimple}.
By Corollary~\ref{lifts}, any pair of $R/P$-algebra generators of $A_{i,j}/ J(A_{i,j})^2$ can be lifted to a pair of $R/P$-algebra generators of $A_{i,j}$. 
It follows that any pair of $R/P$-algebra generators of $A_{i,j}/ J(A_{i,j})$ can be lifted to a pair of $R/P$-algebra generators of $A_{i,j}$.

Now let $B$ be one of the simple quotients of $\Lambda/P\Lambda$ and let $A_B$ be the product
of all those $A_{i,j}$  which have simple quotient isomorphic to $B$ as an $R/P$-algebra.
Then $\Lambda/P\Lambda=\prod_B A_B$. By Proposition~\ref{productgen}, it suffices
to show that each $R/P$-algebra $A_B$ is $2$-generated (recall that $R/P$ is an infinite field). As $A_B/J(A_B)$ is a separable
$R/P$-algebra, it is $2$-generated. It is now clear that Theorem~\ref{primary} implies
that any pair of generators of $A_B/J(A_B)$ lifts to a pair of generators of $A_B$.
\end{proof}

\begin{question}
In the notation of Theorem~\ref{3gen}, is $\Lambda$ generated by two elements as an $R$-algebra?
\end{question}

\noindent
We expect that the answer is negative in general, but at present we do not have a counterexample. Consider however the following related question. Suppose that $R$ is a Dedekind domain
with fraction field $K$ and with all residue fields infinite and perfect. Let $L$ be a finite
separable extension of $K$ and let $S$ be an integral closure of $R$ in $L$. Then $S$ is
a Dedekind domain, finitely generated as $R$-module. Theorem~\ref{localglobal} implies that
$S$ can be generated as an $R$-algebra by no more than 2 elements. 
Is $S$ generated by 1 element? The answer is yes if $R$ is a discrete valuation ring.
The following example shows that the answer is no in general.

\begin{example}{\em
Let $K=\mathbb C(t)$. Take $L=K(\sqrt{t-1}, \sqrt[3]{t})$. Set $y=\sqrt{t-1}$, $x=\sqrt[3]{t}$, so 
$y^2=x^3-1$ and $L=\mathbb C(x,y)$ is the function field of the elliptic curve $y^2=x^3-1$. Then $S=\mathbb C[x,y]$ is the integral closure of $R=\mathbb C[t]$ in $L$, and $S$ 
can not be generated by one element as an $R$-algebra.

\begin{proof}
Note that $L$ is a Galois extension of $K$ of degree 6. The Galois group of $L/K$ is cyclic
generated by an automorphism $\tau$ such that $\tau(y)=-y$ and $\tau(x)= \lambda x$, where
$\lambda$ is a primitive cube root of 1. Note that the fixed field of $\tau^2$ is 
$K(y)=\mathbb C(y)$, $T_y=\mathbb C[y]$ is the integral closure of $R$ in $K(y)$, and $1,y$
is an integral basis of $K(y)/K$ with discriminant $4(t-1)$. 
The fixed field of $\tau^3$ is $K(x)=\mathbb C(x)$, $T_x=\mathbb C[x]$ is the integral closure of $R$ in $K(x)$, and $1, x, x^2$ is an integral basis of $K(x)/K$ with discriminant $-27t^2$.
As the discriminants of $K(x)/K$ and $K(y)/K$ are relatively prime in $R$, we conclude
that the integral closure of $R$ in $L=K(x,y)$ is $S=\mathbb C[x,y]$, and $1,y,x,x^2,xy,x^2y$ is an
integral basis of $L/K$ with discriminant $(4(t-1))^3(-27t^2)^2=6^6t^4(t-1)^3$ (see \cite[Proposition 2.11]{neuk}).
  
Suppose now that $S=R[h]$ for some $h\in S$. Then $1,h,h^2,h^3,h^4,h^5$ is an integral basis
of $L/K$ with discriminant $\displaystyle \prod_{0\leq i<j\leq 5}(\tau^i(h)-\tau^{j}(h))^2$.
Thus 
\begin{equation}\label{disc}
 \prod_{0\leq i<j\leq 5}(\tau^i(h)-\tau^{j}(h))^2=ct^4(t-1)^3 
\end{equation} 
for some constant $c\in \mathbb C$. 
We may choose $h$ of the form $h=a_1y+a_2x+a_3x^2+a_4yx+a_5yx^2$ for some
$a_1,\ldots, a_5\in R$. Then
\[ (\tau^i(h)-\tau^{i+3}(h))^2= 4(t-1)(a_1+a_4\lambda^ix+a_5\lambda^{2i}x^2)^2
\]
for $i=0,1,2$. It follows from (\ref{disc}) (interpreted as equality in $\mathbb C[x]$)
that $a_1(x^3)+a_4(x^3)x+a_5(x^3)x^2$ divides $x^{12}$. It follows that two of $a_1,a_4,a_5$
are 0 and the third is of the form $dt^i$ for some $i$. If $a_1=0$  or $a_1=dt^i$ with
$i>0$ then we would have $h=xg$ for some $g\in S$ and the discriminant of
$1,h,h^2,h^3,h^4,h^5$ would be divisible by $x^{30}=t^{10}$, a contradiction. It follows that
$a_1$ is constant and $a_4=a_5=0$. Now, when $0\leq i<j\leq 5$ have the same parity (there are six
such pairs $i,j$), then
\[ (\tau^i(h)-\tau^{j}(h))^2= (\lambda^i-\lambda^j)^2x^2(a_2+(\lambda^i+\lambda^j)a_3x)^2.
\]   
It follows that $\displaystyle x^{12}\prod(a_2(x^3)+(\lambda^i+\lambda^j)xa_3(x^3))^2$ divides 
$x^{12}$
in $\mathbb C[x]$. This can only happen if $a_3=0$ and $a_2$ is constant. We may then assume
that $h=x+uy$ for some constant $u\neq 0$. Then 
\[(h-\tau(h))^2(\tau^3(h)-\tau^4(h))^2 =[(1-\lambda)^2x^2-4u^2(x^3-1)]^2
\]
divides $x^{12}$ in $\mathbb C[x]$, a contradiction. 
\end{proof}
}
\end{example}

At this point we add the following to the notation and assumptions made at the beginning of this section:

{\sffamily{
\begin{itemize}
\item $R$ is the ring of integers in a number field $K$.

\item $n_{i,j}$ is the local capacity of $H_i$ at $P_{i,j}$.

\item  $m_{i,j}$ is the local index of $H_i$ at $P_{i,j}$.

\item $R/P$ is a finite field of order $q$, i.e. $R/P\cong \F_q$.

\item $f_{i,j}$ is the inertia degree of $P_{i,j}$ over $P$, so 
 $R_i/P_{i,j}\cong \F_{q^{f_{i,j}}}$.
 
\item $I(n,r)=\{(i,j): n_{i,j}=n\ \text{and} \ f_{i,j}m_{i,j}=r\}$.

\item $M(n,r)$ is the number of elements in $I(n,r)$

\item $\displaystyle A(n,r)=\prod_{(i,j)\in I(n,r)} A_{i,j}$.
\end{itemize}
Note that many of the above notions depend on the maximal ideal $P$ of $R$ and, if necessary, they will be treated as functions of $P$. For example, we will write $M(n,r,P)$, $f_{i,j}(P)$, $I(n,r,P)$
for $M(n,r)$, $f_{i,j}$, $I(n,r)$ respectively, if the dependence on $P$ needs to be indicated.
}}

By Theorem~\ref{simple}, $A_{i,j}=\Lambda_{i}/P_{i,j}^{e_{i,j}}\Lambda_i$ is a finite dimensional
primary $R/P$-algebra with simple quotient isomorphic to $\M_{n_{i,j}}(\F_{q^{f_{i,j}m_{i,j}}})$.
Thus $I(n,r)$ is the set of all pairs $(i,j)$ such that the simple quotient of $A_{i,j}$
is isomorphic to $\M_{n}(\F_{q^r})$ as $R/P=F_q$-algebras and $M(n,r)$ is the number of simple
factors of the semisimple algebra $(\Lambda/P\Lambda)/J(\Lambda/P\Lambda)$ which
are isomorphic to $\M_n(\F_{q^r})$.

\begin{theorem}\label{smallest}
Let $R$ be the ring of integers in a number field $K$ and let $\Lambda$ be a maximal
$R$-order in a finite dimensional semisimple $K$-algebra $H$. For each maximal ideal
$P$ of $R$ and positive integers $n,r$ let $M(n,r,P)$ be the number of simple
factors of the semisimple algebra $(\Lambda/P\Lambda)/J(\Lambda/P\Lambda)$ which
are isomorphic to $\M_n(\F_{q^r})$, where $q=\N(P)$. Let $h$ be the smallest
positive integer such that 
\begin{equation}\label{smallgen}
 M(n,r,P)\leq \frac{\g_h(n,q,r)}{r|\text{\rm PGL}_n(\mathbb F_{q^r})|}
\end{equation}
for every maximal ideal $P$ of $R$ and any positive integers $n,r$.
If $h\geq 3$ then $h$ is the smallest number of generators of $\Lambda$
as an $R$-algebra. If $h=2$ then the smallest number of generators is $2$ or $3$.
If $h=1$ then $\Lambda$ is commutative and the smallest number of generators is $1$ or $2$. 
\end{theorem}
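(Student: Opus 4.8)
The plan is to compute $r(\Lambda,R)$ through the local-global principle of Theorem~\ref{localglobal}, controlling the local terms $r_P(\Lambda)=r(\Lambda/P\Lambda,\F_q)$ by means of the semisimple quotient of $\Lambda/P\Lambda$. Fix a maximal ideal $P$ with $q=\N(P)$ and set $S_P:=(\Lambda/P\Lambda)/J(\Lambda/P\Lambda)$. By (\ref{modP}) and Theorem~\ref{simple}, $S_P\cong\prod_{n,r}\M_n(\F_{q^r})^{M(n,r,P)}$, and since the blocks attached to distinct pairs $(n,r)$ have no isomorphic simple quotients, Proposition~\ref{productgen} together with Theorem~\ref{formula} shows that $S_P$ is $k$-generated over $\F_q$ exactly when (\ref{smallgen}) holds at $P$ for all $n,r$. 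Writing $g_P$ for the least such $k$, we get $h=\max_P g_P$. This maximum is finite: $\dim_{\F_q}S_P\le\dim_K H$ bounds every $M(n,r,P)$ uniformly, while for each fixed $(n,r)$ the right-hand side of (\ref{smallgen}) grows with $q$, so $g_P\le 2$ for all but finitely many $P$.

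The crucial local fact is that for $k\ge 2$ the algebra $\Lambda/P\Lambda$ is $k$-generated over $\F_q$ if and only if $S_P$ is. The ``only if'' is clear. For the converse I would argue as in the proof of Theorem~\ref{3gen}: group the factors of (\ref{modP}) by the isomorphism type of their simple quotient, writing $\Lambda/P\Lambda=\prod_{n,r}A(n,r)$ with $A(n,r)=\prod_{(i,j)\in I(n,r)}A_{i,j}$, so by Proposition~\ref{productgen} it suffices to generate each $A(n,r)$. Given a $k$-tuple generating $A(n,r)/J(A(n,r))=\M_n(\F_{q^r})^{M(n,r,P)}$, I lift it factorwise: in each $A_{i,j}$ the induced tuple generates $A_{i,j}/J(A_{i,j})=\M_n(\F_{q^r})$, which is separable over $\F_q$, so by Theorem~\ref{bisimple} applied to $A_{i,j}/J(A_{i,j})^2$ (whose radical is a simple bimodule by Theorem~\ref{simple}(ii)) and then Corollary~\ref{lifts}, I may choose a lift generating $A_{i,j}$. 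The resulting $k$-tuple in $A(n,r)$ satisfies condition (1) of Theorem~\ref{primary} by construction, and condition (2) because its image in $S_P$ is the chosen generating tuple; hence it generates $A(n,r)$. This gives $r_P(\Lambda)\le\max(g_P,2)$, with equality when $g_P\ge 2$.

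The three cases now follow. If $\Lambda$ is $k$-generated then each $S_P$ is, forcing $k\ge g_P$ for every $P$ and hence $k\ge h$; thus $r(\Lambda,R)\ge h$ always. For the upper bound, $r_K(\Lambda)=r_K(H)\le 2$, because $H$ is separable over the infinite field $K$ and separable algebras over infinite fields are $2$-generated \cite{mmbp}. When $h\ge 3$ we then have $r_K(\Lambda)+1\le 3\le h$ and $r_P(\Lambda)\le\max(g_P,2)\le h$ for all $P$, so Theorem~\ref{localglobal} yields that $\Lambda$ is $h$-generated, giving $r(\Lambda,R)=h$.

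When $h=2$ the same estimates give $r_P(\Lambda)\le 2$ and $r_K(\Lambda)+1\le 3$, so Theorem~\ref{localglobal} makes $\Lambda$ $3$-generated, while $g_P=2$ for some $P$ forces $r(\Lambda,R)\ge 2$; hence the answer is $2$ or $3$. When $h=1$, condition (\ref{smallgen}) holds with $k=1$, and since a single element generates only a commutative subalgebra we have $\g_1(n,q,r)=0$ for $n\ge 2$, whence $M(n,r,P)=0$ for all $n\ge 2$, i.e. every $S_P$ is commutative. A simple factor $H_i$ of degree $d_i\ge 2$ over its center $K_i$ is split at all but finitely many primes, producing blocks $\M_{d_i}(\F_{q^r})$ with $d_i\ge 2$ in $S_P$ for infinitely many $P$, which is impossible; so each $H_i$ is a field and $\Lambda=\prod_i R_i$ is commutative. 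Then $H=\prod_i K_i$ is étale over the infinite field $K$, hence monogenic, so $r_K(\Lambda)\le 1$; with $r_P(\Lambda)\le 2$, Theorem~\ref{localglobal} applied with $k=2$ shows $\Lambda$ is $2$-generated, so $r(\Lambda,R)$ is $1$ or $2$. The main obstacle is the factorwise lifting of the second paragraph: one must coordinate Theorem~\ref{primary}, Theorem~\ref{bisimple} and Corollary~\ref{lifts} so that the lifts generate every primary factor simultaneously while preserving the non-isomorphism condition (2), and it is exactly the hypothesis $k\ge 2$ in Theorem~\ref{bisimple} that creates the slack responsible for the ambiguity in the cases $h=1,2$.
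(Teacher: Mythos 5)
Your proof is correct and follows essentially the same route as the paper: the local-global reduction via Theorem~\ref{localglobal}, the decomposition $\Lambda/P\Lambda=\prod A(n,r)$ grouped by simple quotient with Proposition~\ref{productgen}, lifting generators through $J(A_{i,j})^2$ via Theorem~\ref{bisimple}, Corollary~\ref{lifts} and Theorem~\ref{primary} (with $k\geq 2$ supplying the liftability and the commutative case handling $h=1$), and Theorem~\ref{formula} deciding the semisimple blocks, which is exactly condition (\ref{smallgen}). The only difference is cosmetic: the paper carries out the exact count $\g_k(A(n,r),R/P)$ in formula (\ref{gennr}) with the constants $c_{i,j}$ (which it reuses later in the proof of Theorem~\ref{smallest1}), whereas you argue positivity and liftability qualitatively, which suffices for this theorem.
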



\begin{proof}
As $H$ is a separable $K$-algebra, we have $r_K(\Lambda)=1$ if $\Lambda$ is commutative
and $r_K(\Lambda)=2$ if $\Lambda$ is not commutative. By Theorem~\ref{localglobal}, it suffices
to prove that $h$ is the smallest number such that $\g_h(\Lambda, P)>0$ for all
maximal ideals $P$ of $R$. 

Consider a maximal ideal $P$ of $R$. By (\ref{modP}), we have
\[  \Lambda/P\Lambda= \prod A(n,r),\]
where the product is over all pairs $n,r$ such that $I(n,r)$ is non-empty.
Any simple quotient of $A(n,r)=\prod_{(i,j)\in I(n,r)} A_{i,j}$ is isomorphic to 
$\M_{n}(\F_{q^r})$. Thus, when $I(n,r)$ and $I(n_1,r_1)$ are distinct and non-empty, 
the algebras $A(n,r)$ and $A(n_1,r_1)$ have no isomorphic quotients.  By Proposition~\ref{productgen}, for any integer $k>0$ we have 
\begin{equation}\label{glambda}
\g_k(\Lambda, P)=\prod \g_k(A(n,r), R/P),
\end{equation}
where the product is over all pairs $n,r$ such that $I(n,r)$ is non-empty. 
By Theorem~\ref{primary}, we have
\[ \g_k(A(n,r), R/P)= \g_k(\M_{n}(\F_{q^r})^{M(n,r)}, \F_q)\prod_{(i,j)\in I(n,r)}g_k(A_{i,j},
A_{i,j}/J(A_{i,j})).\]
Recall now that $g_k(A_{i,j},A_{i,j}/J(A_{i,j}))= g_k(A_{i,j},A_{i,j}/J(A_{i,j})^2)g_k(A_{i,j}/J(A_{i,j})^2,A_{i,j}/J(A_{i,j}))$.
By Corollary~\ref{lifts}, $g_k(A_{i,j},A_{i,j}/J(A_{i,j})^2)=|J(A_{i,j})^2|^k$.
Define $c_{i,j}$ as follows:
\begin{equation}\label{cij}
c_{i,j}=\begin{cases} 1, & \text{if} \ m_{i,j}>1;\\
q^{-f_{i,j}}, & \text{if} \ m_{i,j}=1 \ \text{and} \  e_{i,j}>1; \\
0, & \text{if} \ m_{i,j}=1 \ \text{and} \  e_{i,j}=1. 
\end{cases}
\end{equation}
Then, by Theorem~\ref{bisimple} and Theorem~\ref{simple} we have 
\[ g_k(A_{i,j}/J(A_{i,j})^2,A_{i,j}/J(A_{i,j}))= |J(A_{i,j})/J(A_{i,j})^2|^k-c_{i,j}|J(A_{i,j})/J(A_{i,j})^2|.\]
Finally, $|J(A_{i,j})^t|=q^{n^2r(e_{i,j}m_{i,j}-t)}$ for all positive integers $t\leq e_{i,j}m_{i,j}$. Putting all this together we get
\begin{equation}\label{gennr}
\g_k(A(n,r), R/P)= \g_k(\M_{n}(\F_{q^r})^{M(n,r)}, \F_q)\prod_{(i,j)\in I(n,r)}q^{kn^2r(e_{i,j}m_{i,j}-2)}(q^{kn^2r}-c_{i,j}q^{n^2r}).
\end{equation}
By Theorem~\ref{formula}, $\g_k(\M_{n}(\F_{q^r})^{M(n,r)}, \F_q)>0$ if and only if
(\ref{smallgen}) holds for $M(n,r,P)=M(n,r)$. Thus, $h$ is the smallest
positive integer such that  $\g_k(\M_{n}(\F_{q^r})^{M(n,r)}, \F_q)>0$ for
all maximal ideals $P$ and all $n,r$ such that $M(n,r)>0$. If $h>1$, then
this is the same as $\g_h(A(n,r), R/P)>0$ for
all maximal ideals $P$ and all $n,r$ such that $M(n,r)>0$, so the theorem is true
in this case. If $h=1$, then $n=1$ whenever $M(n,r)>0$. It follows that $\Lambda$ 
is commutative and $c_{i,j}$ are always less than 1. Hence $\g_1(A(n,r), R/P)>0$ for
all maximal ideals $P$ and all $n,r$ such that $M(n,r)>0$ and the theorem holds
in this case as well. 
\end{proof}

When $h=1$ in Theorem~\ref{smallest}, then there are examples when the smallest number
of generators is $2$. In fact, finding such examples with additional assumption
that $H$ is a field has been an old topic in algebraic number theory. See the last section
of \cite{pleasant} for some examples. The next result shows that when $h=2$ the ambiguity
about the smallest number of generators can only happen when $H$ has a simple factor
of dimension 4 over its center.

\begin{theorem}\label{smallest1}
If in Theorem~\ref{smallest} $h=2$ and no simple factor of $H$ has dimension $4$ over its center
and $\Lambda$ is free as an $R$-module, then the smallest number of generators of $\Lambda$ as an $R$-algebra is $2$. If Conjecture~\ref{conjdense} is true then the assumption that $\Lambda$ is free
can be dropped. 
\end{theorem}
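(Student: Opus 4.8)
The plan is to derive $2$-generation from a positive-density statement. Since $\Lambda$ is assumed free of rank $m=\dim_K H$ as an $R$-module, Theorem~\ref{density} applies and gives
\[ \den_2(\Lambda)=\prod_{P\in\mspec R}\frac{\g_2(\Lambda,P)}{\N(P)^{2m}}. \]
If I can show this product is strictly positive, then the remark following Theorem~\ref{density} yields immediately that $\Lambda$ is generated by $2$ elements; since Theorem~\ref{smallest} already guarantees the smallest number of generators is $2$ or $3$ when $h=2$, this forces it to be exactly $2$. Each factor of the product is positive: as observed in the proof of Theorem~\ref{smallest}, $h$ is the least integer with $\g_h(\Lambda,P)>0$ for every $P$, so $h=2$ gives $\g_2(\Lambda,P)>0$ for all $P$. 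Hence positivity of the product is equivalent to its convergence to a nonzero limit, i.e. to the summability $\sum_P\bigl(1-\g_2(\Lambda,P)/\N(P)^{2m}\bigr)<\infty$.

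To establish this I would split the primes into two groups. All but finitely many $P$ are unramified in every $R_i$ and in every $H_i$; call these the good primes. For a good $P$ one has $e_{i,j}=m_{i,j}=1$ for all $(i,j)$, so $\Lambda/P\Lambda$ is semisimple, equal to $\prod_{n,r}\M_n(\F_{q^r})^{M(n,r,P)}$ with $q=\N(P)$, formula \eqref{gennr} collapses to $\g_2(\Lambda,P)=\prod_{n,r}\g_2(\M_n(\F_{q^r})^{M(n,r,P)},\F_q)$, and counting dimensions gives $\sum_{n,r}n^2r\,M(n,r,P)=m$. The decisive point is that at a good $P$ the local capacity $n_{i,j}$ equals $d_i:=\sqrt{\dim_{K_i}H_i}$, and the hypothesis that no simple factor of $H$ has dimension $4$ over its center says precisely that $d_i\neq 2$ for every $i$; therefore every index $n$ occurring at a good prime satisfies $n\neq 2$, which is exactly the range in which Lemma~\ref{estimat} is valid.

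I would then apply Lemma~\ref{estimat} factor by factor. Let $N_0$ be a bound on $\sum_{n,r}M(n,r,P)$ valid for all $P$ (one exists, depending only on $H$, because the number of pairs $(i,j)$ is uniformly bounded), and let $c_0$ be the maximum of the finitely many constants $c(n,r,t)$ that arise. For every good $P$ with $\N(P)\geq c_0$, combining the lemma with the dimension identity $\sum n^2rM(n,r,P)=m$ and Bernoulli's inequality yields
\[ \frac{\g_2(\Lambda,P)}{\N(P)^{2m}}\;\geq\;\prod_{n,r}\Bigl(1-\tfrac{25M(n,r,P)}{q^2}\Bigr)^{M(n,r,P)}\;\geq\;1-\frac{25N_0^2}{q^2}. \]
Hence $1-\g_2(\Lambda,P)/\N(P)^{2m}=O(\N(P)^{-2})$ along the good primes, and since $\sum_P\N(P)^{-2}<\infty$ the tail of the product converges. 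The finitely many excluded primes contribute finite, strictly positive factors, so the whole product is positive, giving $\den_2(\Lambda)>0$ and thus $2$-generation.

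The only genuine difficulty is the passage to $n\neq 2$: Lemma~\ref{estimat} really does fail for $n=2$, and this failure is the source of the $2$-versus-$3$ ambiguity in Theorem~\ref{smallest}, so the entire argument rests on the dimension-$4$ exclusion forcing $n_{i,j}\neq 2$ at cofinitely many primes. Finitely many ramified primes may still contribute factors with $n=2$ (for instance when $d_i=4$ and $m_{i,j}=2$), but these are harmless for convergence. Finally, to remove the freeness hypothesis I would note that a maximal order is finitely generated and torsion-free, hence projective, over the Dedekind domain $R$; granting Conjecture~\ref{conjdense}, the positivity of the right-hand side of \eqref{dens1} established above already implies that $\Lambda$ is $2$-generated, so the conclusion persists without assuming $\Lambda$ free.
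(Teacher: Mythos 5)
Your proposal is correct and follows essentially the same route as the paper's proof: positivity of $\den_2(\Lambda)$ via Theorem~\ref{density}, with Lemma~\ref{estimat} applied at the cofinitely many unramified primes where the dimension-$4$ exclusion forces every local capacity $n\neq 2$, convergence of the tail product $\prod_P\bigl(1-O(\N(P)^{-2})\bigr)$ (the paper bounds it by $(1-25d/q^2)^d$ rather than via Bernoulli, an immaterial difference), strict positivity of the finitely many remaining factors coming from $h=2$, and Conjecture~\ref{conjdense} to drop freeness in the projective case. One cosmetic slip: your parenthetical example of a bad prime with $n=2$ cites $d_i=4$, which the hypothesis excludes --- a correct instance would be, e.g., $d_i=16$ with local index $m_{i,j}=2$.
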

\begin{proof}
According to Theorem~\ref{density} and Conjecture~\ref{conjdense}, it suffice to prove that 
\begin{equation}\label{h=2}
\prod_{P \in \mspec R}\frac{\g_2(\Lambda, P)}{\N(P)^{2d}}>0,
\end{equation}
where $d$ is the dimension of $H$ over $K$. We have seen in the proof of Theorem~\ref{smallest}
that all the factors in (\ref{h=2}) are positive. In addition, for all but a finite
number of maximal ideals $P$, all the numbers $m_{i,j}$ and $e_{i,j}$ are equal to 1 and $n_{i,j}^2=d_i$ is the dimension of $H_i$ over $K_i$.  Let $C>d$
be such that all $P$ with $\N(P)>C$ have this property and $C>c(n,r,t)$ for all
$n,r,t$ such that $a\leq d$, $r\leq d$, $r\leq d$
(here $c(n,r,t)$ are the constants from Lemma~\ref{estimat}). 
Thus, when $\N(P)>C$ then
\begin{itemize}
\item $I(n,r,P)=\{(i,j): d_i=n^2\ \text{and}\ f_{i,j}=r\}$.

\item $A(n,r,P)=\M_{n}(\F_{q^r})^{M(n,r,P)}$, where $q=\N(P)$.

\item $\displaystyle d=\sum_{(n,r)} M(n,r,P)rn^2$. 
\end{itemize}
By (\ref{glambda}), (\ref{gennr}), and Lemma~\ref{estimat}, when $q=\N(P)>C$  then
\[\frac{\g_2(\Lambda, P)}{\N(P)^{2d}}=\prod_{(n,r)}\frac{\g_2(\M_{n}(\F_{q^r})^{M(n,r,P)}, \F_q)}{
q^{2M(n,r,P)rn^2}}\geq \prod_{(n,r)}\left(1-\frac{25M(n,r,P)}{q^2} \right)^{
M(n,r,P)}\geq
\]
\[\geq \left(1-\frac{25d}{q^2} \right)^{\sum_{(n,r)} M(n,r,P)  }\geq \left(1-\frac{25d}{q^2} \right)^d.\]
The result follows now from the well-known fact that the product
\[\prod_{P \in \mspec R,\  \N(P)>25d}\left(1-\frac{25d}{\N(P)^2} \right)\]
converges (i.e. it is positive).
\end{proof}

\begin{remark}
{\em Using Theorem~\ref{exactformula}(ii) it is not hard to show that if $H$ has a simple factor
of dimension 4 over its center then the product (\ref{h=2}) is indeed $0$}.
\end{remark}

\begin{example}{\em
Let $H=B^m$, where $B$ is a quaternion algebra over $\mathbb Q$ ramified at exactly the primes
$p_1<\ldots<p_t$ (and at infinity, if $t$ is odd). Let $\Lambda$ be a maximal order in $B$, so
$\Lambda^m$ is a maximal order in $H$. 
We have $M(1,2,p_i)=m$ and 
$M(n,r,p_i)=0$ if $n\neq 1$ or $r\neq 2$.
For every prime $p$ different from the ramified primes we have $M(2,1,p)=m$ and $M(n,r,p)=0$ if $n\neq 2$ or $r\neq 1$. Thus we are looking for the smallest $k$ such that
$\displaystyle m\leq \frac{\g_k(2,p)}{p(p^2-1)}$ for every $p$ unramified in $B$ and $m\leq \g_k(1,p_i,2)/2$ for $i=1,\ldots, t$. By Theorem~\ref{exactformula}, the first condition is equivalent to
\[ m\leq \frac{p^{2k-1}(p^k-1)(p^k-p)}{p^2-1}. \]
By Theorem~\ref{mobius}, the second condition can be stated as
\[ 2m\leq  p_i^{2k}-p_i^{k}. \]
If $2$ ramifies in $B$, i.e. if $p_1=2$, then all these inequalities hold if and only if
$m\leq 2^{k-1}(2^k-1)$. Thus there is $u\leq 6$ such that the minimal number of generators 
of $\Lambda^m$ is $2$ if and only if $m\leq u$, and it is $3$ if and only if
$u<m\leq 28$ and
it is $k>3$ iff $2^{k-2}(2^{k-1}-1)<m\leq 2^{k-1}(2^k-1)$. 
Suppose now that $2$ is unramified. Then our conditions will be satisfied iff
$m\leq 2^{2k-1}(2^k-1)(2^k-2)/3$ and $2m\leq  p_1^{2k}-p_1^{k}$. 
Note that $2^{2k-1}(2^k-1)(2^k-2)/3\leq 4^k(4^k-1)/3$. It follows that if $p_1>4$, i.e. 
if $3$ is also unramified, then $m\leq 2^{2k-1}(2^k-1)(2^k-2)/3$ implies the other condition.
Thus there is $u\leq 16$ such that the minimal number of generators 
of $\Lambda^m$ is $2$ if and only if $m\leq u$ and it is $3$ iff $u<m\leq 448$ and
it is $k>3$ iff $2^{2k-3}(2^{k-1}-1)(2^{k-1}-2)/3 < m\leq 2^{2k-1}(2^k-1)(2^k-2)/3$.
Finally suppose that $p_1=3$. Then the conditions are $ m\leq 2^{2k-1}(2^k-1)(2^k-2)/3$ and $m\leq  3^{k}(3^{k}-1)/2$. For $k=2$ this is equivalent to $m\leq 16$,  and for all $k\geq 3$ it
is equivalent to $m\leq  3^{k}(3^{k}-1)/2$. Thus there is $u\leq 16$ such that the minimal number of generators 
of $\Lambda^m$ is $2$ if and only if $m\leq u$ and it is $3$ iff $u<m\leq 351$ and
it is $k>3$ iff $3^{k-1}(3^{k-1}-1)/2 < m\leq 3^{k}(3^{k}-1)/2$.
In the introduction to \cite{kmp} we expressed a hope that knowing the smallest number of generators of $\Lambda^m$ for every positive integer $m$ may tell us a lot about the structure of $\Lambda$. This example shows that this is not the case.}
\end{example}

\begin{question}
What is $u$ in the above example?
\end{question}

\end{document}